\def\textmatrix#1&#2\\#3&#4\\{\bigl({#1 \atop #3}\ {#2 \atop #4}\bigr)}
\def\dispmatrix#1&#2\\#3&#4\\{\left({#1 \atop #3}\ {#2 \atop #4}\right)}
\newcommand{\beg}{\begin{equation}}
\newcommand{\eeg}{\end{equation}}
\newcommand{\ben}{\begin{eqnarray*}}
\newcommand{\een}{\end{eqnarray*}}
\newtheorem{thm}{Theorem}[section]
\newtheorem{lem}[thm]{Lemma}
\newtheorem{prop}[thm]{Proposition}
\numberwithin{equation}{section} \theoremstyle{definition}
\newtheorem{defn}[thm]{Definition}
\def\textmatrix#1&#2\\#3&#4\\{\bigl({#1 \atop #3}\ {#2 \atop #4}\bigr)}
\def\dispmatrix#1&#2\\#3&#4\\{\left({#1 \atop #3}\ {#2 \atop #4}\right)}
\begin{document}
\title[The failure of rational dilation on the tetrablock]
{The failure of rational dilation on the tetrablock}

\author[Sourav Pal]{Sourav Pal}
\address[Sourav Pal]{Department of Mathematics,
Indian Institute of Technology Bombay, Powai, Mumbai - 400076,
India.} \email{sourav@math.iitb.ac.in , souravmaths@gmail.com}

\keywords{Tetrablock, Spectral set, Complete spectral set,
Rational dilation, Functional model}

\subjclass[2010]{47A13, 47A15, 47A20, 47A25, 47A45}

\thanks{ This work was supported in part by the Center
for Advanced Studies in Mathematics at Ben-Gurion University of
the Negev, Israel when the author was visiting the Mathematics
Department of BGU during 2012 - 2014. At present the author's work
is supported by INSPIRE Faculty Award of DST, India.}

\begin{abstract}
We show by a counter example that rational dilation fails on the
tetrablock, a polynomially convex and non-convex domain in
$\mathbb C^3$ defined as
\[
\mathbb E = \{ (x_1,x_2,x_3)\in\mathbb C^3\,:\,
1-zx_1-wx_2+zwx_3\neq 0 \textup{ whenever } |z|\leq 1, |w|\leq 1
\}.
\]
A commuting triple of operators $(T_1,T_2,T_3)$ for which
the closed tetrablock $\overline{\mathbb E}$ is a spectral set, is
called an $\mathbb E$-contraction. For an $\mathbb E$-contraction
$(T_1,T_2,T_3)$, the two operator equations
\[
T_1-T_2^*T_3=D_{T_3}X_1D_{T_3} \textup{ and } T_2-T_1^*T_3=
D_{T_3}X_2D_{T_3}, \quad D_{T_3}=(I-T_3^*T_3)^{\frac{1}{2}},
\]
have unique solutions $A_1,A_2$ on $\mathcal
D_{T_3}=\overline{Ran} D_{T_3}$ and they are called the
fundamental operators of $(T_1,T_2,T_3)$. For a particular class
of $\mathbb E$-contractions, we prove it necessary for the
existence of rational dilation that the corresponding fundamental
operators $A_1,A_2$ satisfy
\begin{equation}\label{abstract}
A_1A_2=A_2A_1 \textup{ and } A_1^*A_1-A_1A_1^*=A_2^*A_2-A_2A_2^*.
\end{equation}
Then we construct an $\mathbb E$-contraction from that particular
class which fails to satisfy (\ref{abstract}). We produce a
concrete functional model for pure $\mathbb E$-isometries, a class
of $\mathbb E$-contractions analogous to the pure isometries in
one variable. The fundamental operators play the main role in this
model.
\end{abstract}

\maketitle

\section{Introduction}

\noindent Let $X$ be a compact subset of $\mathbb C^n$ and let
$\mathcal R(X)$ denote the algebra of all rational functions on
$X$, that is, all quotients $p/q$ of polynomials $p,q$ for which
$q$ has no zeros in $X$. The norm of an element $f$ in $\mathcal
R(X)$ is defined as
\[
\|f\|_{\infty, X}=\sup \{|f(\xi)|\;:\; \xi \in X  \}.
\]
Also for each $k\geq 1$, let $\mathcal R_k(X)$ denote the algebra
of all $k \times k$ matrices over $\mathcal R(X)$. Obviously each
element in $\mathcal R_k(X)$ is a $k\times k$ matrix of rational
functions $F=(f_{i,j})$ and we can define a norm on $\mathcal
R_k(X)$ in the canonical way
\[
\|F\|=\sup \{ \|F(\xi)\|\;:\; \xi\in X \},
\]
thereby making $\mathcal R_k(X)$ into a non-commutative normed
algebra. Let $\underline{T}=(T_1,\cdots,T_n)$ be an $n$-tuple of
commuting operators on a Hilbert space $\mathcal H$. The set $X$
is said to be a \textit{spectral set} for $\underline T$ if the
Taylor joint spectrum $\sigma (\underline T)$ of $\underline T$ is
a subset of $X$ and
\begin{equation}\label{defn1}
\|f(\underline T)\|\leq \|f\|_{\infty, X}\,, \textup{ for every }
f\in \mathcal R(X).
\end{equation}
Here $f(\underline T)$ can be interpreted as $p(\underline
T)q(\underline T)^{-1}$ when $f=p/q$. Moreover, $X$ is said to be
a \textit{complete spectral set} if $\|F(\underline T)\|\leq
\|F\|$ for every $F$ in $\mathcal R_k(X)$, $k=1,2,\cdots$.\\

Let $\mathcal A(X)$ be the algebra of continuous complex-valued
functions on $X$ which separates the points of $X$. A
\textit{boundary} for $\mathcal A(X)$ is a closed subset $F$ of
$X$ such that every function in $\mathcal A(X)$ attains its
maximum modulus on $F$. It follows from the theory of uniform
algebras that if $bX$ is the intersection of all the boundaries of
$X$ then $bX$ is a boundary for $\mathcal A(X)$ (see Theorem 9.1
of \cite{wermer}). This smallest boundary $bX$ is called the
$\check{\textup{S}}$\textit{ilov boundary} relative to the
algebra $\mathcal A(X)$ .\\

A commuting $n$-tuple of operators $\underline T$ that has $X$ as
a spectral set, is said to have a \textit{rational dilation} or
\textit{normal} $bX$-\textit{dilation} if there exists a Hilbert
space $\mathcal K$, an isometry $V:\mathcal H \rightarrow \mathcal
K$ and an $n$-tuple of commuting normal operators $\underline
N=(N_1,\cdots,N_n)$ on $\mathcal K$ with $\sigma(\underline
N)\subseteq bX$ such that
\begin{equation}\label{rational-dilation}
f(\underline T)=V^*f(\underline N)V, \textup{ for every } f\in
\mathcal R(X).
\end{equation}

One of the important discoveries in operator theory is Sz.-Nagy's
unitary dilation for a contraction, \cite{nagy1}, which opened a
new horizon by announcing the success of rational dilation on the
closed unit disc of $\mathbb C$. Since then one of the main aims
of operator theory has been to determine the success or failure of
rational dilation on the closure of a bounded domain in $\mathbb
C^n$. It is evident from the definitions that if $X$ is a complete
spectral set for $\underline T$ then $X$ is a spectral set for
$\underline T$. A celebrated theorem of Arveson states that
$\underline T$ has a normal $bX$-dilation if and only if $X$ is a
complete spectral set for $\underline T$ (Theorem 1.2.2 and its
corollary, \cite{sub2}). Therefore, the success or failure of
rational dilation is equivalent to asking whether the fact that
$X$ is a spectral set for $\underline T$ automatically turns $X$
into a complete spectral set for $\underline T$. History witnessed
an affirmative answer to this question given by Agler when $X$ is
an annulus \cite{agler-ann} and by Ando when $X=\overline{\mathbb
D^2}$ \cite{ando}. Agler, Harland and Raphael have produced, by
machine computation, an example of a triply connected domain in
$\mathbb C$ where the answer is negative \cite{ahr}. Dritschel and
M$^{\textup{c}}$Cullough also gave a negative answer to that
question when $X$ is an arbitrary triply connected domain
\cite{DM}. Parrott showed by a counter example \cite{parrott} that
rational dilation fails on the closed tridisc $\overline{\mathbb
D^3}$. Also recently we have success of rational dilation on the
closed symmetrized bidisc $\Gamma$ \cite{ay-jfa, tirtha-sourav,
sourav}, where $\Gamma$ is defined as
\begin{equation}\label{gamma}
\Gamma =\{ (z_1+z_2,z_1z_2)\,:\, |z_1|\leq 1, |z_2|\leq 1 \}.
\end{equation}

In this article, we show that rational dilation fails when $X$ is
the closure of the tetrablock $\mathbb E$, a polynomially convex,
non-convex and inhomogeneous domain in $\mathbb C^3$, defined as
\[
\mathbb E=\{ (x_1,x_2,x_3)\in\mathbb C^3\,:\,
1-zx_1-wx_2+zwx_3\neq 0 \textup{ whenever } |z|\leq 1, |w|\leq 1
\}.
\]
This domain has attracted the attention of a number of
mathematicians
\cite{awy,awy-cor,young,EZ,EKZ,Zwo,tirtha,tirtha-sau,sourav2}
because of its relevance to $\mu$-synthesis and $H^{\infty}$
control theory. The following result from \cite{awy} (Theorem 2.4,
part-(9)) characterizes points in $\mathbb E$ and
$\overline{\mathbb E}$ and provides a geometric description of the
tetrablock.
\begin{thm}\label{thm1}
A point $(x_1,x_2,x_3)\in \mathbb C^3$ is in $\overline{\mathbb
E}$ if and only if $|x_3|\leq 1$ and there exist $\beta_1,\beta_2
\in \mathbb C$ such that $|\beta_1|+|\beta_2| \leq 1$ and
$x_1=\beta_1 + \bar{\beta_2}x_3, \quad
x_2=\beta_2+\bar{\beta_1}x_3$.
\end{thm}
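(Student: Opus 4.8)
The plan is to convert the two–variable non‑vanishing condition defining $\mathbb E$ into a one–variable interpolation problem, extract an explicit scalar inequality, and finally solve a linear system for $\beta_1,\beta_2$. Throughout write $\Psi_x(z,w)=1-zx_1-wx_2+zwx_3$. First I would establish the convenient reformulation
\[
(x_1,x_2,x_3)\in\overline{\mathbb E}\iff \Psi_x(z,w)\neq 0 \textup{ for all } (z,w)\in\mathbb{D}^2,
\]
where $\mathbb D$ is the open unit disc. The forward implication holds because $\Psi_x(z_0,w_0)=0$ with $(z_0,w_0)\in\mathbb{D}^2$ is an open condition on $x$, so it would persist on a neighbourhood of $x$ and exclude that neighbourhood from $\mathbb E$. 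The reverse implication uses a scaling trick: if $\Psi_x$ has no zero in $\mathbb{D}^2$, then $x^{(r)}=(rx_1,rx_2,r^2x_3)$ satisfies $\Psi_{x^{(r)}}(z,w)=\Psi_x(rz,rw)\neq 0$ on $\overline{\mathbb D}^{\,2}$ for $0<r<1$, whence $x^{(r)}\in\mathbb E$ and $x^{(r)}\to x$.

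Next, for fixed $z$ the polynomial $\Psi_x(z,\cdot)=(1-zx_1)-w(x_2-zx_3)$ is affine in $w$, and it is zero–free on $\mathbb D$ exactly when $|x_2-zx_3|\le|1-zx_1|$. Hence membership is equivalent to $|x_2-zx_3|\le|1-zx_1|$ for every $z\in\overline{\mathbb D}$; the slice $z=0$ forces $|x_1|\le1$ and $|x_2|\le1$, and (away from the degenerate case $|x_1|=1$) the maximum principle applied to $z\mapsto (x_2-zx_3)/(1-zx_1)$ reduces the condition to $|z|=1$. Expanding $|1-zx_1|^2-|x_2-zx_3|^2$ on the unit circle and minimising over $z$ then yields
\[
2|x_1-\overline{x_2}x_3|\le 1+|x_1|^2-|x_2|^2-|x_3|^2,
\]
and the symmetric role of the two variables (fixing $w$ instead of $z$) gives the companion inequality with the indices $1$ and $2$ interchanged. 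Adding the two produces $|x_1-\overline{x_2}x_3|+|x_2-\overline{x_1}x_3|\le 1-|x_3|^2$.

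Finally I would solve the linear system $x_1=\beta_1+\overline{\beta_2}x_3,\ x_2=\beta_2+\overline{\beta_1}x_3$. When $|x_3|<1$ its determinant $1-|x_3|^2$ is nonzero, and the unique solution is $\beta_1=(x_1-\overline{x_2}x_3)/(1-|x_3|^2)$ and $\beta_2=(x_2-\overline{x_1}x_3)/(1-|x_3|^2)$, so that $|\beta_1|+|\beta_2|\le 1$ is literally the inequality just obtained; this settles the ``only if'' part for $|x_3|<1$. For the ``if'' part I would substitute these expressions for $x_1,x_2$ back into $\Psi_x$ and verify the factorisation $\Psi_x=P-x_3Q$, with $P=1-z\beta_1-w\beta_2$ and $Q=z\overline{\beta_2}+w\overline{\beta_1}-zw$. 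Since $|z\beta_1+w\beta_2|<|\beta_1|+|\beta_2|\le1$ on $\mathbb{D}^2$, we have $P\neq 0$ there, and the estimate $|Q|\le|P|$ on $\overline{\mathbb D}^{\,2}$ (strict on $\mathbb{D}^2$), valid precisely because $|\beta_1|+|\beta_2|\le 1$, together with $|x_3|\le1$ gives $|\Psi_x|\ge|P|-|x_3|\,|Q|>0$ on $\mathbb{D}^2$, hence $x\in\overline{\mathbb E}$.

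The main obstacle I anticipate is twofold, and both parts are algebraic rather than conceptual. First, verifying the key estimate $|Q|\le|P|$, for which one must show $|P|^2-|Q|^2\ge 0$ on the closed bidisc using only $|\beta_1|+|\beta_2|\le 1$; this is where the identity $|x_1-\overline{x_2}x_3|^2-|x_2-\overline{x_1}x_3|^2=(|x_1|^2-|x_2|^2)(1-|x_3|^2)$ quietly does the gluing between the two boundary inequalities. Second, the degenerate locus $|x_3|=1$, where the linear system is singular and $\beta_1,\beta_2$ are no longer given by the above formulas; I would dispatch this case by a continuity argument, approximating along points with $|x_3|<1$ and extracting a convergent pair $(\beta_1,\beta_2)$ from the compact set $\{\,|\beta_1|+|\beta_2|\le 1\,\}$.
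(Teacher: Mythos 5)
Your proposal is correct in outline, but there is nothing in the paper to compare it with: the paper does not prove Theorem \ref{thm1} at all; it quotes the statement from Abouhajar--White--Young (Theorem 2.4, part (9) of their paper on the Schwarz lemma for the tetrablock), where it appears as one clause in a long chain of equivalent conditions. So your argument is a self-contained elementary substitute rather than a variant of the paper's reasoning. Checked on its own terms, the architecture holds up: the Hurwitz-plus-scaling identification of $\overline{\mathbb E}$ with the set where $\Psi_x\neq 0$ on the open bidisc is valid (the rescaled point $(rx_1,rx_2,r^2x_3)$ works exactly as you say); slicing in each variable and minimizing over the circle gives $2|x_1-\overline{x_2}x_3|\le 1+|x_1|^2-|x_2|^2-|x_3|^2$ and its companion; their sum yields both $|x_3|\le 1$ and, when $|x_3|<1$, the bound $|\beta_1|+|\beta_2|\le 1$ for your explicit solution of the linear system; the factorization $\Psi_x=P-x_3Q$ is exact; and the compactness argument disposes of $|x_3|=1$ (your own scaling trick even supplies approximants whose third coordinate $r^2x_3$ has modulus $<1$, so you never need to know separately that $\mathbb E$ itself lies in $|x_3|<1$). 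The one step you deferred, $|Q|\le|P|$ on $\overline{\mathbb D}^{\,2}$ with strictness on $\mathbb D^2$, is true and closes quickly --- though not via the identity you name, which plays no role here. In $|P|^2-|Q|^2$ the mixed terms $2\operatorname{Re}(z\bar w\beta_1\bar\beta_2)$ cancel, and the surviving phase-dependent terms $\operatorname{Re}(z\beta_1)$, $\operatorname{Re}(w\beta_2)$ carry the nonpositive coefficients $-2(1-|w|^2)$, $-2(1-|z|^2)$; hence the minimum over phases is attained at the real data $a=|z|$, $b=|w|$, $p=|\beta_1|$, $q=|\beta_2|$, where
\[
P^2-Q^2=\bigl[1-(a+b)(p+q)+ab\bigr]\,\bigl[1-(a-b)(p-q)-ab\bigr]
\ge (1-a)(1-b)\,\bigl(1-\max(a,b)\bigr)\bigl(1+\min(a,b)\bigr),
\]
using $|p-q|\le p+q\le 1$ in each bracket; both brackets are nonnegative on the closed bidisc and strictly positive when $a,b<1$, which is exactly the strictness you need.

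Two local misstatements, neither of which damages the proof. First, the slice $z=0$ gives only $|x_2|\le 1$; the bound $|x_1|\le 1$ comes from the $w=0$ slice of the companion family, so both slicings are genuinely needed. Second, membership in $\overline{\mathbb E}$ is \emph{not} equivalent to the single family $|x_2-zx_3|\le|1-zx_1|$, $z\in\overline{\mathbb D}$: the point $(2,0,0)$ satisfies that family (the left side is identically zero) yet $1-2z$ vanishes at $z=1/2$. The equivalence requires the supplementary condition $|x_1|\le 1$, equivalently the companion family, which rules out identically vanishing $w$-slices. Since you only ever use the necessity direction of that claim, and your sufficiency argument runs through $P$ and $Q$ instead, the proof survives; likewise the strict inequality $|z\beta_1+w\beta_2|<|\beta_1|+|\beta_2|$ fails in the trivial case $\beta_1=\beta_2=0$, where $P\equiv 1$ anyway.
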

It is evident from the above result that the tetrablock lives
inside the tridisc $\mathbb D^3$. The distinguished boundary
(which is same as the $\check{\textup{S}}$\textit{ilov} boundary)
of the tetrablock was determined in \cite{awy} (see Theorem 7.1 of
\cite{awy}) to be the set
\begin{align*}
b \overline{\mathbb E} &=\{(x_1,x_2,x_3)\in \mathbb C^3\,:\,
x_1=\bar{x_2}x_3,|x_2|\leq 1, |x_3|=1 \}\\& =\{ (x_1,x_2,x_3)\in
\overline{\mathbb E}\,:\, |x_3|=1 \}.
\end{align*}

In \cite{tirtha}, Bhattacharyya introduced the study of commuting
operator triples that have $\overline{\mathbb E}$ as a spectral
set. There such a triple was called a \textit{tetrablock
contraction}. As a notation is always convenient, we shall call
such a triple an $\mathbb E$-contraction. So we are led to the
following definition:
\begin{defn}
A triple of commuting operators $(T_1,T_2,T_3)$ on a Hilbert space
$\mathcal H$ for which $\overline{\mathbb E}$ is a spectral set is
called an $\mathbb E$-$contraction$.
\end{defn}
Since the tetrablock lives inside the tridisc, an $\mathbb
E$-contraction consists of commuting contractions. Evidently
$(T_1^*,T_2^*,T_3^*)$ is an $\mathbb E$-contraction when
$(T_1,T_2,T_3)$ is an $\mathbb E$-contraction. We briefly recall
from the literature the special classes of $\mathbb
E$-contractions which are analogous to uniteries, isometries and
co-isometries in one variable operator theory.
\begin{defn}
Let $T_1,T_2,T_3$ be commuting operators on a Hilbert space
$\mathcal H$. We say that $(T_1,T_2,T_3)$ is
\begin{itemize}
\item [(i)] an $\mathbb E$-\textit{unitary} if $T_1,T_2,T_3$ are
normal operators and the joint spectrum $\sigma_T(T_1,T_2,T_3)$ is
contained in $b\overline{\mathbb E}$ ; \item [(ii)] an $\mathbb
E$-\textit{isometry} if there exists a Hilbert space $\mathcal K$
containing $\mathcal H$ and an $\mathbb E$-unitary
$(\tilde{T_1},\tilde{T_2},\tilde{T_3})$ on $\mathcal K$ such that
$\mathcal H$ is a common invariant subspace of $T_1,T_2,T_3$ and
that $T_i=\tilde{T_i}|_{\mathcal H}$ for $i=1,2,3$; \item [(iii)]
an $\mathbb E$-\textit{co-isometry} if $(T_1^*,T_2^*,T_3^*)$ is an
$\mathbb E$-isometry.
\end{itemize}
\end{defn}
 Moreover, an
$\mathbb E$-isometry $(T_1,T_2,T_3)$ is said to be \textit{pure}
if $T_3$ is a pure isometry, i.e, if ${T_3^*}^n \rightarrow 0$
strongly as $n \rightarrow \infty$. We accumulate some results
from the
literature in section 2 and they will be used in sequel.\\

It is clear that a rational dilation of an $\mathbb E$-contraction
$(T_1,T_2,T_3)$ is nothing but an $\mathbb E$-unitary dilation of
$(T_1,T_2,T_3)$, that is, an $\mathbb E$-unitary $N=(N_1,N_2,N_3)$
that dilates $T$ by satisfying (\ref{rational-dilation}).
Similarly an $\mathbb E$-isometric dilation of $T=(T_1,T_2,T_3)$
is an $\mathbb E$-isometry $V=(V_1,V_2,V_3)$ that satisfies
(\ref{rational-dilation}). In Theorem 3.5 in \cite{tirtha}, it was
shown that for every $\mathbb E$-contraction $(T_1,T_2,T_3)$ there
were two unique operators $A_1,A_2$ in $\mathcal L(\mathcal
D_{T_3})$ such that
\[
T_1-T_2^*T_3=D_{T_3}A_1D_{T_3}\, , \;
T_2-T_1^*T_3=D_{T_3}A_2D_{T_3}\,.
\]
Here $D_{T_3}=(I-T_3^*T_3)^{\frac{1}{2}}$ and $\mathcal
D_{T_3}=\overline{Ran}\,D_{T_3}$ and $\mathcal L(\mathcal H)$, for
a Hilbert space $\mathcal H$, always denotes the algebra of
bounded operators on $\mathcal H$. An explicit $\mathbb
E$-isometric dilation was constructed for a particular class of
$\mathbb E$-contractions in \cite{tirtha} (see Theorem 6.1 in
\cite{tirtha}) and $A_1,A_2$ played the fundamental role in that
explicit construction of dilation.  For their pivotal role in the
dilation, $A_1$ and $A_2$ were called the
\textit{fundamental operators} of $(T_1,T_2,T_3)$.\\

In section 4, we produce a set of necessary conditions for the
existence of rational dilation for a class of $\mathbb
E$-contractions. Indeed, in Proposition \ref{ultimate}, we show
that if $(T_1,T_2,T_3)$ is an $\mathbb E$-contraction on $\mathcal
H_1 \oplus \mathcal H_1$ for some Hilbert space $\mathcal H_1$,
satisfying
\begin{itemize}
 \item[(i)] $ Ker (D_{T_3})=\mathcal H_1\oplus \{0\} \textup{ and } \mathcal D_{T_3} =
 \{0\}\oplus \mathcal H_1$
\item[(ii)] $T_3(\mathcal D_{T_3})=\{0\}$ \textup{and} $T_3
Ker(D_{T_3})\subseteq \mathcal D_{T_3}$
\end{itemize}
 and if $A_1,A_2$ are the fundamental operators of $(T_1,T_2,T_3)$, then for the existence
 of an $\mathbb E$-isometric dilation of
$(T_1^*,T_2^*,T_3^*)$ it is necessary that
\begin{equation}\label{nece-eqn}
[A_1,A_2]=0 \textup{ and }[A_1^*,A_1]=[A_2^*,A_2].
\end{equation}
Here $[S_1,S_2]=S_1S_2-S_2S_1$, for any two operators $S_1,S_2$.
In section 5, we construct an example of an $\mathbb
E$-contraction that satisfies the hypotheses of Proposition
\ref{ultimate} but fails to satisfy (\ref{nece-eqn}). This
concludes
the failure of rational dilation on the tetrablock.\\

The proof of Proposition \ref{ultimate} depends heavily upon a
functional model for pure $\mathbb E$-isometries which we provide
in Theorem \ref{model1}. There is an Wold type decomposition for
an $\mathbb E$-isometry (see Theorem \ref{ti}) that splits an
$\mathbb E$-isometry into two parts of which one is an $\mathbb
E$-unitary and the other is a pure $\mathbb E$-isometry. Theorem
\ref{tu} describes the structure of an $\mathbb E$-unitary.
Therefore, a concrete model for pure $\mathbb E$-isometries gives
a complete description of an $\mathbb E$-isometry. In Theorem
\ref{model1}, we show that a pure $\mathbb E$-isometry
$(\hat{T_1},\hat{T_2},\hat{T_3})$ can be modelled as a commuting
triple of Toeplitz operators $(T_{A_1^*+A_2z},T_{A_2^*+A_1z},T_z)$
on the vectorial Hardy space $H^2(\mathcal D_{\hat{T_3^*}})$,
where $A_1$ and $A_2$ are the fundamental operators of the
$\mathbb E$-co-isometry $(\hat{T_1}^*,\hat{T_2}^*,\hat{T_3}^*)$.
The converse is also true, that is, every such triple of commuting
contractions $(T_{A+Bz},T_{B^*+A^*z},T_z)$ on a vectorial Hardy
space is a pure $\mathbb E$-isometry.

\section{Preliminary results}

We begin with a lemma that simplifies the definition of $\mathbb
E$-contraction.

\begin{lem} \label{simpler}
A commuting triple of bounded operators $(T_1,T_2,T_3)$ is an
$\mathbb E$-contraction if and only if $\| f (T_1,T_2,T_3) \| \leq
\| f \|_{\infty, \overline{\mathbb E}}$ for any holomorphic
polynomial $f$ in three variables.
\end{lem}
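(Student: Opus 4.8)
The plan is to prove the equivalence stated in Lemma~\ref{simpler} by showing that the polynomial bound automatically upgrades to the full rational bound required in the definition of an $\mathbb E$-contraction. One direction is immediate: if $\overline{\mathbb E}$ is a spectral set for $(T_1,T_2,T_3)$, then \eqref{defn1} holds for all $f\in\mathcal R(\overline{\mathbb E})$, and in particular for holomorphic polynomials, which are a subset of $\mathcal R(\overline{\mathbb E})$ since they have no poles anywhere. So the content is entirely in the reverse direction: assuming $\|f(T_1,T_2,T_3)\|\le\|f\|_{\infty,\overline{\mathbb E}}$ for every holomorphic polynomial $f$, I must deduce the same inequality for every rational function with no poles on $\overline{\mathbb E}$, together with the Taylor-spectrum containment $\sigma(T_1,T_2,T_3)\subseteq\overline{\mathbb E}$.

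First I would establish the spectral containment. The key geometric fact is that $\mathbb E$ is polynomially convex, stated repeatedly in the excerpt. This means $\overline{\mathbb E}$ equals its own polynomially convex hull, so the polynomial inequality forces $\sigma(T_1,T_2,T_3)\subseteq\overline{\mathbb E}$: if some point $\lambda$ of the Taylor spectrum lay outside $\overline{\mathbb E}$, polynomial convexity would furnish a polynomial $f$ with $|f(\lambda)|>\|f\|_{\infty,\overline{\mathbb E}}$, while the spectral mapping theorem gives $f(\lambda)\in\sigma(f(T_1,T_2,T_3))$, hence $\|f(T_1,T_2,T_3)\|\ge|f(\lambda)|>\|f\|_{\infty,\overline{\mathbb E}}$, contradicting the hypothesis. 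Once $\sigma(T)\subseteq\overline{\mathbb E}$ is known, the functional calculus $f\mapsto f(T_1,T_2,T_3)=p(T)q(T)^{-1}$ is well defined for every $f=p/q\in\mathcal R(\overline{\mathbb E})$, because $q$ has no zeros on $\overline{\mathbb E}\supseteq\sigma(T)$ and so $q(T)$ is invertible.

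Next I would pass from polynomials to rational functions by an approximation argument on the polynomially convex compact set $\overline{\mathbb E}$. By Oka--Weil, any function holomorphic on a neighbourhood of a polynomially convex compact set is a uniform limit of polynomials on that set; since a rational function $f=p/q$ with $q$ nonvanishing on $\overline{\mathbb E}$ is holomorphic on a neighbourhood of $\overline{\mathbb E}$, there is a sequence of polynomials $p_n\to f$ uniformly on $\overline{\mathbb E}$. Applying the polynomial hypothesis to $p_n-p_m$ gives $\|p_n(T)-p_m(T)\|\le\|p_n-p_m\|_{\infty,\overline{\mathbb E}}\to0$, so $(p_n(T))$ is Cauchy and converges in operator norm. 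I would then identify the limit with $f(T)=p(T)q(T)^{-1}$, which follows because multiplying $p_n\to f$ by $q$ gives $p_nq\to pq$ uniformly and hence $p_n(T)q(T)\to p(T)$ in norm, forcing $\lim p_n(T)=p(T)q(T)^{-1}$. Taking limits in $\|p_n(T)\|\le\|p_n\|_{\infty,\overline{\mathbb E}}$ then yields $\|f(T)\|\le\|f\|_{\infty,\overline{\mathbb E}}$, which is exactly \eqref{defn1}.

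The main obstacle I anticipate is the careful handling of the Taylor functional calculus and the spectral mapping theorem in several commuting variables, where one must ensure that $f(T)$ defined through Oka--Weil approximation agrees with the naive expression $p(T)q(T)^{-1}$ and is independent of the chosen representation $p/q$; the subtlety is that the several-variable holomorphic functional calculus, rather than mere polynomial substitution, is what legitimizes these manipulations. Everything else is soft: polynomial convexity handles the spectral containment, and Oka--Weil plus a Cauchy-sequence argument handles the approximation, so the crux is simply recording that these standard multivariable tools apply to the polynomially convex set $\overline{\mathbb E}$.
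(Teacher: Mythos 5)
Your proof is correct and takes essentially the same approach as the paper's: the paper disposes of this lemma by invoking exactly the polynomial convexity of $\overline{\mathbb E}$ (deferring details to Lemma 3.3 of Bhattacharyya's paper \cite{tirtha}), and that argument is precisely your combination of the spectral mapping theorem for the Taylor spectrum with Oka--Weil approximation of rational functions by polynomials on a polynomially convex compact set. No gaps to report.
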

This actually follows from the fact that $\overline{\mathbb E}$ is
polynomially convex. For a proof to this lemma see Lemma 3.3 of
\cite{tirtha}. The following theorem gives a set of
characterization for $\mathbb E$-unitaries (Theorem 5.4 of
\cite{tirtha}).

\begin{thm}\label{tu}
Let $\underline N = (N_1, N_2, N_3)$ be a commuting triple of
bounded operators. Then the following are equivalent.

\begin{enumerate}

\item $\underline N$ is an $\mathbb E$-unitary,

\item $N_3$ is a unitary and $\underline N$ is an $\mathbb
E$-contraction,

\item $N_3$ is a unitary, $N_2$ is a contraction and $N_1 = N_2^*
N_3$.
\end{enumerate}
\end{thm}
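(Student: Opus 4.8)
The plan is to prove the three conditions equivalent by establishing the cyclic chain $(1)\Rightarrow(2)\Rightarrow(3)\Rightarrow(1)$, using the spectral theorem for commuting normal operators as the main engine and the characterization of $\overline{\mathbb E}$ from Theorem \ref{thm1} together with Lemma \ref{simpler} to handle norm estimates.

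For $(1)\Rightarrow(2)$ I would invoke the joint spectral measure of the commuting normal triple $\underline N$. Since $\sigma(\underline N)\subseteq b\overline{\mathbb E}\subseteq\overline{\mathbb E}$ and the normal functional calculus gives $\|f(\underline N)\|=\sup_{\zeta\in\sigma(\underline N)}|f(\zeta)|$ for every $f\in\mathcal R(\overline{\mathbb E})$ (note $q(\underline N)$ is invertible whenever $q$ has no zero on $\overline{\mathbb E}$), it follows that $\|f(\underline N)\|\le\|f\|_{\infty,\overline{\mathbb E}}$, so $\overline{\mathbb E}$ is a spectral set and $\underline N$ is an $\mathbb E$-contraction. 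That $N_3$ is unitary follows because, by spectral mapping applied to the coordinate polynomial $x_3$, the set $\sigma(N_3)$ is the projection of $\sigma(\underline N)$ onto the third coordinate, which lies in $\mathbb T$ since every point of $b\overline{\mathbb E}$ has $|x_3|=1$; a normal operator with spectrum in $\mathbb T$ is unitary.

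For $(2)\Rightarrow(3)$, Theorem \ref{thm1} gives $|x_2|\le|\beta_2|+|\beta_1||x_3|\le 1$ on $\overline{\mathbb E}$, so the coordinate function $x_2$ has supremum norm at most $1$; by Lemma \ref{simpler}, $\|N_2\|\le\|x_2\|_{\infty,\overline{\mathbb E}}\le 1$ and $N_2$ is a contraction. For the identity $N_1=N_2^*N_3$ I would exploit the fundamental operators recalled in the introduction: because $N_3$ is unitary we have $D_{N_3}=(I-N_3^*N_3)^{1/2}=0$, so the defining equation $N_1-N_2^*N_3=D_{N_3}A_1D_{N_3}$ collapses immediately to $N_1=N_2^*N_3$.

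The substantial direction is $(3)\Rightarrow(1)$, which I expect to be the main obstacle, since normality must be manufactured from bare commutativity and the single relation $N_1=N_2^*N_3$. First, from $N_1N_3=N_3N_1$ together with $N_1=N_2^*N_3$ and the invertibility of $N_3$ one deduces $N_2^*N_3=N_3N_2^*$, so $N_2$ commutes with both $N_3$ and $N_3^*$; then $N_1N_2=N_2N_1$ yields $(N_2^*N_2-N_2N_2^*)N_3=0$, whence $N_2$ is normal, and a short computation using $N_3N_3^*=I$ shows $N_1N_1^*=N_1^*N_1=N_2^*N_2$, so $N_1$ is normal as well. Thus $N_1,N_2,N_3$ form a commuting normal triple, and by Fuglede--Putnam they generate a commutative $C^*$-algebra, so the spectral theorem applies. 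Passing to the joint spectral measure (equivalently, evaluating characters), the relations $N_3^*N_3=I$, $\|N_2\|\le 1$, and $N_1=N_2^*N_3$ translate pointwise on $\sigma(\underline N)$ into $|\zeta_3|=1$, $|\zeta_2|\le 1$, and $\zeta_1=\overline{\zeta_2}\,\zeta_3$, which are precisely the defining conditions of $b\overline{\mathbb E}$. Hence $\sigma(\underline N)\subseteq b\overline{\mathbb E}$ and $\underline N$ is an $\mathbb E$-unitary. The delicate point throughout is ensuring that the spectral theorem legitimately converts these operator identities into the correct pointwise constraints on the Taylor joint spectrum.
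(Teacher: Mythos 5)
Your proof is correct. Note that the paper itself never proves this theorem---it imports it as Theorem 5.4 of \cite{tirtha}---and your argument is essentially the same as the proof given there: the same cyclic chain $(1)\Rightarrow(2)\Rightarrow(3)\Rightarrow(1)$, with normality of $N_1,N_2$ extracted algebraically from $N_1=N_2^*N_3$ and the unitarity of $N_3$, followed by the commutative $C^*$-algebra/Gelfand identification of the joint spectrum inside $b\overline{\mathbb E}$; your use of the fundamental-operator equation in $(2)\Rightarrow(3)$ (which collapses since $D_{N_3}=0$) likewise mirrors the original and is non-circular, because the existence of fundamental operators (Theorem \ref{funda-exist-unique}) is established independently of this characterization.
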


\noindent Here is a structure theorem for the $\mathbb
E$-isometries.
\begin{thm} \label{ti}

Let $\underline V = (V_1, V_2, V_3)$ be a commuting triple of
bounded operators. Then the following are equivalent.

\begin{enumerate}

\item $\underline V$ is an $\mathbb E$-isometry.

\item $V_3$ is an isometry and $\underline V$ is an $\mathbb
E$-contraction.

\item $V_3$ is an isometry, $V_2$ is a contraction and $V_1=V_2^*
V_3$.

\item (\textit{Wold decomposition}) $\mathcal H$ has a
decomposition $\mathcal H=\mathcal H_1\oplus \mathcal H_2$ into
reducing subspaces of $V_1,V_2,V_3$ such that $(V_1|_{\mathcal
H_1},V_2|_{\mathcal H_1},V_3|_{\mathcal H_1})$ is an $\mathbb
E$-unitary and $(V_1|_{\mathcal H_2},V_2|_{\mathcal
H_2},V_3|_{\mathcal H_2})$ is a pure $\mathbb E$-isometry.

\end{enumerate}

\end{thm}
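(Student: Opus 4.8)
The plan is to prove the four conditions equivalent through the cycle $(1)\Rightarrow(2)\Rightarrow(3)\Rightarrow(4)\Rightarrow(1)$, saving the only genuine construction for the passage $(3)\Rightarrow(4)$. For $(1)\Rightarrow(2)$, an $\mathbb E$-isometry $\underline V$ is by definition the restriction of an $\mathbb E$-unitary $\underline{\tilde V}$ on some $\mathcal K\supseteq\mathcal H$ to a joint invariant subspace $\mathcal H$. By Theorem \ref{tu} the operator $\tilde V_3$ is unitary, so its restriction $V_3$ to the invariant subspace $\mathcal H$ is an isometry; and $\underline V$ is an $\mathbb E$-contraction by Lemma \ref{simpler}, since for every polynomial $f$ one has $\|f(\underline V)\|=\|f(\underline{\tilde V})|_{\mathcal H}\|\le\|f(\underline{\tilde V})\|\le\|f\|_{\infty,\overline{\mathbb E}}$. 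For $(2)\Rightarrow(3)$, because $\overline{\mathbb E}\subseteq\overline{\mathbb D^3}$ every $\mathbb E$-contraction consists of contractions, so $V_2$ is a contraction; and the fundamental operator identities $V_1-V_2^*V_3=D_{V_3}A_1D_{V_3}$, $V_2-V_1^*V_3=D_{V_3}A_2D_{V_3}$ (Theorem 3.5 of \cite{tirtha}) collapse, since $V_3$ is an isometry forces $D_{V_3}=0$, to $V_1=V_2^*V_3$ together with the symmetric relation $V_2=V_1^*V_3$.

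For $(3)\Rightarrow(4)$ I would first run the classical Wold decomposition of the isometry $V_3$, writing $\mathcal H=\mathcal H_1\oplus\mathcal H_2$ with $\mathcal H_1=\bigcap_{n\ge0}V_3^n\mathcal H$ the unitary part and $\mathcal H_2=\bigoplus_{n\ge0}V_3^n(\ker V_3^*)$ the shift part. Commutativity gives $V_iV_3^n\mathcal H=V_3^nV_i\mathcal H\subseteq V_3^n\mathcal H$, hence $V_i\mathcal H_1\subseteq\mathcal H_1$ for $i=1,2$; combining this with $V_1^*=V_3^*V_2$ and $V_2^*=V_3^*V_1$ and the fact that $\mathcal H_1$ reduces $V_3$ yields $V_i^*\mathcal H_1\subseteq\mathcal H_1$ as well, so $\mathcal H_1$, and therefore $\mathcal H_2$, reduce the whole triple. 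On $\mathcal H_1$ the operator $V_3$ is unitary, $V_2$ is a contraction and $V_1=V_2^*V_3$, so Theorem \ref{tu} identifies the restriction $(V_1,V_2,V_3)|_{\mathcal H_1}$ as an $\mathbb E$-unitary.

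The substantial step, and the one I expect to be the main obstacle, is to show that $(V_1,V_2,V_3)|_{\mathcal H_2}$ is a \emph{pure} $\mathbb E$-isometry, i.e. that it admits an $\mathbb E$-unitary extension. I would identify $\mathcal H_2$ with $H^2(\mathcal L)$, $\mathcal L=\ker V_3^*$, so that $V_3|_{\mathcal H_2}$ becomes the shift $T_z$. Since $V_2|_{\mathcal H_2}$ and $V_1|_{\mathcal H_2}$ commute with $T_z$, they are analytic Toeplitz operators $T_\Phi,T_\Psi$ with $\|\Phi\|_\infty\le1$, and the relation $V_1=V_2^*V_3$ forces the symbol to be affine, $\Phi(z)=\Phi_0+\Phi_1 z$ (matching coefficients kills all higher terms). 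Passing to the bilateral shift $M_z$ on $L^2(\mathcal L)$, the minimal unitary extension of $T_z$, I would extend $T_\Phi$ to the Laurent operator $M_\Phi$, a contraction leaving $H^2(\mathcal L)$ invariant, and set $W_1=M_\Phi^*M_z$. By Theorem \ref{tu} the triple $(W_1,M_\Phi,M_z)$ is an $\mathbb E$-unitary extension once it is shown to commute, and the only nontrivial commutation $[W_1,M_\Phi]=0$ reduces to pointwise normality $\Phi(\zeta)^*\Phi(\zeta)=\Phi(\zeta)\Phi(\zeta)^*$ on the unit circle. This is exactly what the commutativity of $V_1$ and $V_2$ on $\mathcal H_2$ supplies: $[T_\Phi,T_\Psi]=0$ is equivalent to $[\Phi(\zeta),\Psi(\zeta)]=0$, and comparing coefficients of the affine symbols produces precisely the normality relations. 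The crux is therefore that the hypothesis of a commuting triple is exactly the ingredient that turns the naive Laurent extension into an $\mathbb E$-unitary.

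Finally $(4)\Rightarrow(1)$ is a direct-sum argument: an $\mathbb E$-unitary is trivially an $\mathbb E$-isometry, a pure $\mathbb E$-isometry is an $\mathbb E$-isometry by definition, and the direct sum of the two $\mathbb E$-unitary extensions is an $\mathbb E$-unitary (direct sums of normal tuples are normal and the joint spectrum is the union of the pieces, hence inside $b\overline{\mathbb E}$) into which $\mathcal H$ embeds as a joint invariant subspace with the correct restrictions. Thus $\underline V$ is an $\mathbb E$-isometry, closing the cycle.
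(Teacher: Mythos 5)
Your proposal cannot be compared line-by-line with the paper's own argument, because the paper does not actually prove Theorem \ref{ti}: it simply cites Theorems 5.6 and 5.7 of \cite{tirtha}. Judged on its own terms, your cycle $(1)\Rightarrow(2)\Rightarrow(3)\Rightarrow(4)\Rightarrow(1)$ is correct, and the interesting thing is that your step $(3)\Rightarrow(4)$ independently reconstructs the machinery the paper develops later in Theorem \ref{model1}: the identification of the shift part of $V_3$ with $T_z$ on $H^2(\mathcal L)$, the observation that $T_\Psi=T_\Phi^*T_z$ forces affine symbols, and the passage to Laurent multiplication operators on $L^2(\mathcal L)$, where commutation of the pair is equivalent to pointwise normality of $\Phi(\zeta)$ on the circle --- your three coefficient identities are exactly the paper's conditions $[G_1,G_2]=0$ and $[G_1^*,G_1]=[G_2^*,G_2]$, and your verification that they yield an $\mathbb E$-unitary on $L^2$ is the converse half of Theorem \ref{model1}. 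Two points must be made explicit to keep the cycle honest. First, in $(3)\Rightarrow(4)$ you use $V_2^*=V_3^*V_1$, i.e.\ $V_2=V_1^*V_3$, which is not part of hypothesis (3); it does follow, but only via the one-line computation $V_1^*V_3=(V_2^*V_3)^*V_3=V_3^*V_2V_3=V_3^*V_3V_2=V_2$, using commutativity and $V_3^*V_3=I$. (The paper's route to this identity, the Claim inside the proof of Theorem \ref{model1}, uses Fuglede's theorem, but that argument presupposes an $\mathbb E$-unitary extension, i.e.\ condition (1), which you cannot assume here --- so your algebraic derivation is the one you need.) Second, calling $(W_1,M_\Phi,M_z)$ an \emph{extension} requires checking that $H^2(\mathcal L)$ is invariant under $W_1=M_\Phi^*M_z$; this is immediate once you note that $W_1$ is multiplication by $\zeta\Phi(\zeta)^*=\Phi_1^*+\Phi_0^*\zeta$, an affine analytic symbol, so that $W_1|_{H^2(\mathcal L)}=T_\Psi$. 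With these two one-line repairs your proof is complete and self-contained, which is more than the paper itself provides for this statement.
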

See Theorem 5.6 and Theorem 5.7 of \cite{tirtha} for a proof.

\section{A functional model for pure $E$-isometries}

Let us recall that the {\em numerical radius} of an operator $T$
on a Hilbert space $\mathcal{H}$ is defined by
\[
\omega(T) = \sup \{|\langle Tx,x \rangle|\; : \;
\|x\|_{\mathcal{H}}= 1\}.
\]
It is well known that
\begin{eqnarray}\label{nradius}
r(T)\leq \omega(T)\leq \|T\| \textup{ and } \frac{1}{2}\|T\|\leq
\omega(T)\leq \|T\|, \end{eqnarray} where $r(T)$ is the spectral
radius of $T$. We state a basic lemma on numerical radius whose
proof is a routine exercise. We shall use this lemma in sequel.
\begin{lem} \label{basicnrlemma}
The numerical radius of an operator $T$ is not greater than one if
and only if  Re $\beta T \leq I$ for all complex numbers $\beta$
of modulus $1$.
\end{lem}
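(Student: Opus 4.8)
The plan is to reduce the operator statement to a scalar fact about complex numbers. The bridge is the elementary identity
\[
\sup_{|\beta|=1}\operatorname{Re}(\beta w)=|w|\qquad (w\in\mathbb C),
\]
which I would record first: writing $w=|w|e^{i\theta}$ and $\beta=e^{is}$ gives $\operatorname{Re}(\beta w)=|w|\cos(s+\theta)\le|w|$, with equality attained at $\beta=\bar w/|w|$ (and trivially at any $\beta$ when $w=0$). This is the only computation needed, and it is the conceptual heart of the lemma even though it is entirely routine.

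Next I would unpack the operator inequality. For a bounded operator $S$ on $\mathcal H$, the self-adjoint part is $\operatorname{Re}S=\tfrac12(S+S^*)$, and $\operatorname{Re}S\le I$ means by definition $\la(\operatorname{Re}S)x,x\ra\le\|x\|^2$ for every $x\in\mathcal H$; moreover $\la(\operatorname{Re}S)x,x\ra=\operatorname{Re}\la Sx,x\ra$. Applying this with $S=\beta T$, the condition $\operatorname{Re}(\beta T)\le I$ is therefore equivalent to
\[
\operatorname{Re}\bigl(\beta\,\la Tx,x\ra\bigr)\le\|x\|^2\quad\text{for all }x\in\mathcal H .
\]

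Now I would interchange the two quantifiers over $x$ and over $\beta$. The hypothesis ``$\operatorname{Re}(\beta T)\le I$ for all $|\beta|=1$'' says the displayed inequality holds for every unit $\beta$ and every $x$; holding this fixed $x$ and taking the supremum over $\beta$, the scalar identity of the first step turns the left side into $|\la Tx,x\ra|$. Thus the full family of operator inequalities is equivalent to $|\la Tx,x\ra|\le\|x\|^2$ for all $x$, which after normalizing to $\|x\|=1$ (using homogeneity of both sides) is exactly $\omega(T)\le 1$. Reading the chain of equivalences forward gives one implication and backward gives the other, so both directions of the biconditional follow simultaneously. I do not anticipate any genuine obstacle here; the only point requiring a word of care is the legitimacy of exchanging the ``for all $\beta$'' with the supremum, which is immediate because the scalar supremum is actually attained at $\beta=\overline{\la Tx,x\ra}/|\la Tx,x\ra|$.
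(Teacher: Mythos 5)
Your proof is correct. The paper gives no argument for this lemma at all---it explicitly calls the proof ``a routine exercise''---and what you have written is exactly the intended routine verification: reduce $\operatorname{Re}(\beta T)\leq I$ to the scalar inequality $\operatorname{Re}\bigl(\beta\langle Tx,x\rangle\bigr)\leq\|x\|^2$ via $\langle\operatorname{Re}(S)x,x\rangle=\operatorname{Re}\langle Sx,x\rangle$, then use $\sup_{|\beta|=1}\operatorname{Re}(\beta w)=|w|$ and homogeneity to recover $\omega(T)\leq 1$; every step, including the quantifier exchange, is sound.
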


We recall from section 1, the existence-uniqueness theorem
(\cite{tirtha}, Theorem 3.5) for the fundamental operators of an
$\mathbb E$-contraction.

\begin{thm}\label{funda-exist-unique}
Let $(T_1,T_2,T_2)$ be an $\mathbb E$-contraction. Then there are
two unique operators $A_1,A_2$ in $\mathcal L(\mathcal D_{T_3})$
such that
\begin{equation}\label{basiceqn}
T_1-T_2^*T_3=D_{T_3}A_1D_{T_3}\textup{ and }
T_2-T_1^*T_3=D_{T_3}A_2D_{T_3}. \end{equation} Moreover,
$\omega(A_1+zA_2)\leq 1$ for all $z\in\overline{\mathbb D}$.
\end{thm}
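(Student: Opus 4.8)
The plan is to reduce the statement to two clean operator inequalities produced by the spectral-set hypothesis, and then to read off both the factorization (existence) and the numerical-radius bound from them. Uniqueness is the easy half: if $D_{T_3}A_1D_{T_3}=D_{T_3}A_1'D_{T_3}$, then $\langle (A_1-A_1')D_{T_3}h,D_{T_3}k\rangle=0$ for all $h,k$, and since $\operatorname{Ran}D_{T_3}$ is dense in $\mathcal D_{T_3}$ this forces $A_1=A_1'$ on $\mathcal D_{T_3}$; likewise for $A_2$. It is worth recording the scalar shadow of the statement, which tells us exactly what to aim for: by Theorem \ref{thm1}, a point $x=(x_1,x_2,x_3)\in\overline{\mathbb E}$ with $|x_3|<1$ has $\beta_1=(x_1-\bar x_2x_3)/(1-|x_3|^2)$ and $\beta_2=(x_2-\bar x_1x_3)/(1-|x_3|^2)$ with $|\beta_1|+|\beta_2|\le 1$. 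Thus $\beta_1,\beta_2$ are the scalar analogues of $A_1,A_2$ (with $1-|x_3|^2$ in the role of $D_{T_3}^2$), and since $\sup_{|z|\le 1}|\beta_1+z\beta_2|=|\beta_1|+|\beta_2|$, the inequality $\omega(A_1+zA_2)\le 1$ for all $z\in\overline{\mathbb D}$ is precisely the operator lift of $|\beta_1|+|\beta_2|\le 1$.

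To generate operator inequalities I would feed the spectral-set property suitable M\"obius-type rational functions. Solving $1-zx_1-w(x_2-zx_3)\neq 0$ for $w$ shows that $x\in\overline{\mathbb E}$ forces $|x_2-zx_3|\le|1-zx_1|$ for every $z\in\overline{\mathbb D}$; hence for fixed $z\in\mathbb D$ the rational function $\Phi_z(x)=(x_2-zx_3)/(1-zx_1)$ is holomorphic on a neighbourhood of $\overline{\mathbb E}$ (no pole there, as $|x_1|\le 1$) with $\|\Phi_z\|_{\infty,\overline{\mathbb E}}\le 1$. By the spectral-set hypothesis, $\Phi_z(T_1,T_2,T_3)=(T_2-zT_3)(I-zT_1)^{-1}$ is a contraction, which rearranges to
\[
I-T_2^*T_2+|z|^2\big(T_1^*T_1-T_3^*T_3\big)-2\,\mathrm{Re}\big[z(T_1-T_2^*T_3)\big]\ \ge\ 0 .
\]
The symmetric function $\Phi_z'(x)=(x_1-zx_3)/(1-zx_2)$ gives the companion inequality with $T_1,T_2$ interchanged, now featuring $T_2-T_1^*T_3$. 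These are the raw data of the proof.

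Existence of $A_1,A_2$ and the sharp bound both flow from these once they are organized by Douglas-type factorization. The goal is the numerical estimate $|\langle (T_1-T_2^*T_3)h,k\rangle|\le c\,\|D_{T_3}h\|\,\|D_{T_3}k\|$: it simultaneously yields $\operatorname{Ran}(T_1-T_2^*T_3)\subseteq\mathcal D_{T_3}$, vanishing on $\ker D_{T_3}$, and the existence of a bounded $A_1$ on $\mathcal D_{T_3}$ with $T_1-T_2^*T_3=D_{T_3}A_1D_{T_3}$ (similarly for $A_2$). To obtain it one specializes $z$ to the boundary, polarizes the resulting diagonal bound, and combines the two inequalities so that the unwanted terms cancel. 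Finally, the same combination, pushed to the form $\mathrm{Re}\big[\beta(D_{T_3}A_1D_{T_3}+zD_{T_3}A_2D_{T_3})\big]\le D_{T_3}^2$ for all $z\in\overline{\mathbb D}$ and all unimodular $\beta$, says after sandwiching out $D_{T_3}$ that $\mathrm{Re}[\beta(A_1+zA_2)]\le I$ on $\mathcal D_{T_3}$, and Lemma \ref{basicnrlemma} converts this into $\omega(A_1+zA_2)\le 1$.

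The main obstacle is exactly this combination. Each of $\Phi_z,\Phi_z'$ carries the extra terms $I-T_1^*T_1$ and $I-T_2^*T_2$, so neither alone delivers the clean bound by $D_{T_3}^2$ (indeed on $\ker D_{T_3}$ a single inequality leaves a residual $\|h\|^2$ term, which only the cross-use of both inequalities — forcing $\|T_1h\|=\|T_2h\|$ there — can kill). One must therefore exploit the joint positivity as $z$ ranges over the entire disc, an operatorial Fej\'er--Riesz/von Neumann type argument, and use the interplay of the two families to reach the $D_{T_3}^2$-bound; the scalar computation of the first paragraph certifies that the constant one is driving toward is sharp and attainable. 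Securing the factorization through $D_{T_3}$ (equivalently, the correct behaviour on $\ker D_{T_3}$) and justifying the limit $|z|\to 1$ are the remaining technical points, but they are comparatively routine once the positivity has been correctly assembled.
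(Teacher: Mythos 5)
The paper you are being compared against does not actually prove this statement: it quotes it from \cite{tirtha} (Theorem 3.5 there), so your proposal must be judged against the known proof, whose route you have in fact correctly reconstructed in outline. Your uniqueness argument is fine, your two ``raw'' operator inequalities obtained by feeding the fractional maps $\Phi_z,\Phi_z'$ to the spectral-set hypothesis are correct (including the radial limit to $|z|=1$), your scalar heuristic via Theorem \ref{thm1} is accurate, and the final conversion --- from a positivity of the form $\mathrm{Re}\,\beta\big[(T_1-T_2^*T_3)+z(T_2-T_1^*T_3)\big]\le D_{T_3}^2$ to the factorization through $D_{T_3}$ together with $\omega(A_1+zA_2)\le 1$ via polarization and Lemma \ref{basicnrlemma} --- is exactly the standard lemma used in \cite{tirtha} and \cite{tirtha-sourav}.

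The genuine gap is precisely the step you yourself label ``the main obstacle'': the central positivity is never established. You defer it to an unspecified ``operatorial Fej\'er--Riesz/von Neumann type argument,'' and without it there is no proof. What you missed is that no such machinery is needed, because the two inequalities should be evaluated at \emph{independent} boundary parameters and simply added: taking the first at a unimodular $\alpha_1$ and the second at a unimodular $\alpha_2$, the terms $T_1^*T_1$ and $T_2^*T_2$ cancel identically and one gets
\[
2(I-T_3^*T_3)-2\,\mathrm{Re}\big[\alpha_1(T_1-T_2^*T_3)+\alpha_2(T_2-T_1^*T_3)\big]\ \ge\ 0
\]
for all $|\alpha_1|=|\alpha_2|=1$; writing $\alpha_1=\beta$, $\alpha_2=\beta z$ and passing from $|z|=1$ to $|z|\le 1$ by averaging over the circle (harmonicity in $z$) gives exactly the target $\mathrm{Re}\,\beta\big[(T_1-T_2^*T_3)+z(T_2-T_1^*T_3)\big]\le D_{T_3}^2$. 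In particular, your claim that one must exploit the joint positivity over the whole disc and ``force $\|T_1h\|=\|T_2h\|$ on $\ker D_{T_3}$'' is a red herring: the cancellation is exact on all of $\mathcal H$, and the only idea missing from your write-up is the independence of the two unimodular parameters (you treat both families as indexed by the same $z$, which indeed only yields a bound on $\omega(A_1+A_2)$ rather than on $\omega(A_1+zA_2)$ for all $z$). With that one line inserted, the remainder of your plan goes through and coincides with the cited proof.
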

As we mentioned in Section 1 that these two unique operators
$A_1,A_2$ are called the fundamental operators of $(T_1,T_2,T_3)$.
The following theorem gives a concrete model for pure $\mathbb
E$-isometries in terms of Toeplitz operators on a vectorial Hardy
space.

\begin{thm}\label{model1}
Let $(\hat{T_1},\hat{T_2},\hat{T_3})$ be a pure $\mathbb
E$-isometry acting on a Hilbert space $\mathcal H$ and let
$A_1,A_2$ denote the corresponding fundamental operators. Then
there exists a unitary $U:\mathcal H \rightarrow H^2(\mathcal
D_{{\hat{T_3}}^*})$ such that
\[
\hat{T_1}=U^*T_{\varphi}U,\quad \hat{T_2}=U^*T_{\psi}U \textup{
and } \hat{T_3}=U^*T_zU,
\]
where $\varphi(z)= G_1^*+G_2z,\,\psi(z)= G_2^*+G_1z, \quad
z\in\mathbb D$ and $G_1=UA_1U^*$ and $G_2=UA_2U^*$. Moreover,
$A_1,A_2$ satisfy
\begin{enumerate}
\item $[A_1,A_2]=0\,;$ \item $[A_1^*,A_1]=[A_2^*,A_2] \,;$ and
\item $\|A_1^*+A_2z\|\leq 1$ for all $z\in {\mathbb D}$.
\end{enumerate}
Conversely, if $A_1$ and $A_2$ are two bounded operators on a
Hilbert space $E$ satisfying the above three conditions, then
$(T_{A_1^*+A_2z},T_{A_2^*+A_1z},T_z)$ on $H^2(E)$ is a pure
$\mathbb E$-isometry.
\end{thm}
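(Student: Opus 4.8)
The plan is to prove both directions by exploiting the structure theorem for $\mathbb E$-isometries (Theorem \ref{ti}) together with the existence-uniqueness of fundamental operators (Theorem \ref{funda-exist-unique}). For the forward direction, I start from the observation that since $(\hat T_1,\hat T_2,\hat T_3)$ is a pure $\mathbb E$-isometry, $\hat T_3$ is a pure isometry, so by the classical Wold decomposition $\hat T_3$ is unitarily equivalent to the unilateral shift $T_z$ on $H^2(\mathcal D_{\hat T_3^*})$, where $\mathcal D_{\hat T_3^*}=\overline{Ran}\,D_{\hat T_3^*}$ is the wandering subspace. Fix such a unitary $U:\mathcal H\to H^2(\mathcal D_{\hat T_3^*})$ implementing $\hat T_3=U^*T_zU$. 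The real content is to show that $\hat T_1$ and $\hat T_2$ are carried by $U$ to analytic Toeplitz operators $T_\varphi,T_\psi$ with the specific symbols $\varphi=G_1^*+G_2z$ and $\psi=G_2^*+G_1z$.

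The key step in identifying the symbols is to pass to the adjoint triple $(\hat T_1^*,\hat T_2^*,\hat T_3^*)$, which is a pure $\mathbb E$-co-isometry, and to exploit the defining equations of its fundamental operators. By Theorem \ref{ti}(3) applied to the $\mathbb E$-isometry we have $\hat T_1=\hat T_2^*\hat T_3$; taking adjoints of the fundamental operator equations (\ref{basiceqn}) for $(\hat T_1^*,\hat T_2^*,\hat T_3^*)$ gives relations of the form $\hat T_1^*-\hat T_2\hat T_3^*=D_{\hat T_3^*}A_1 D_{\hat T_3^*}$ and $\hat T_2^*-\hat T_1\hat T_3^*=D_{\hat T_3^*}A_2 D_{\hat T_3^*}$. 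Under $U$, the operator $D_{\hat T_3^*}$ becomes the projection onto the constant functions (the wandering subspace), since $T_z$ is a shift. I would compute $U\hat T_1 U^*$ by examining its action on $H^2(\mathcal D_{\hat T_3^*})=\bigoplus_{n\ge0} z^n\mathcal D_{\hat T_3^*}$: the commutation $\hat T_1\hat T_3=\hat T_3\hat T_1$ forces $U\hat T_1U^*$ to intertwine with $T_z$ appropriately, and combined with the fundamental-operator relations evaluated against constants, one reads off that $U\hat T_1 U^*$ acts as multiplication by $G_1^*+G_2z$. The analogous computation gives the symbol for $\hat T_2$. This bookkeeping on the graded pieces of $H^2$ is where the bulk of the routine-but-delicate calculation lies, and it is the main obstacle.

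Once the model is established, conditions (1)--(3) follow by demanding that the constructed triple $(T_\varphi,T_\psi,T_z)$ genuinely be a commuting $\mathbb E$-isometry. Commutativity $T_\varphi T_\psi=T_\psi T_\varphi$ of the analytic Toeplitz operators is equivalent to the symbols commuting as $\mathcal L(\mathcal D_{\hat T_3^*})[z]$-valued functions; matching coefficients of $1,z,z^2$ in $\varphi\psi=\psi\varphi$ yields $[G_1,G_2]=0$ at the top and bottom degrees and $[G_1^*,G_1]=[G_2^*,G_2]$ from the middle, which transport back to (1) and (2) via $G_i=UA_iU^*$. Condition (3), namely $\|A_1^*+A_2z\|\le1$ for $z\in\mathbb D$, comes from requiring that the relevant symbol be a contractive multiplier, equivalently that $T_\psi$ (or the associated Toeplitz operator) be a contraction as forced by Theorem \ref{ti}(3); this also connects to the numerical-radius bound $\omega(A_1+zA_2)\le1$ of Theorem \ref{funda-exist-unique} and Lemma \ref{basicnrlemma}.

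For the converse, I assume $A_1,A_2\in\mathcal L(E)$ satisfy (1)--(3) and must verify that $(T_{A_1^*+A_2z},T_{A_2^*+A_1z},T_z)$ is a pure $\mathbb E$-isometry. Since $T_z$ on $H^2(E)$ is a pure isometry, by Theorem \ref{ti} it suffices to check that the triple is a commuting $\mathbb E$-contraction with $T_z$ an isometry, and then that the relation $T_{A_1^*+A_2z}=T_{A_2^*+A_1z}^*T_z$ holds so that part (3) of Theorem \ref{ti} applies. Commutativity of the two Toeplitz operators reduces, as above, to the symbol identities which are exactly (1) and (2); the contractivity needed is supplied by (3) via the multiplier norm, and the adjoint relation $T_{A_1^*+A_2z}=T_{A_2^*+A_1z}^*T_z$ is a direct symbol computation since $(A_2^*+A_1z)^*=A_2+A_1^*\bar z$ composed with the shift produces $A_1^*+A_2z$ on the Hardy space. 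Verifying that $\overline{\mathbb E}$ is a spectral set reduces by Lemma \ref{simpler} to a polynomial inequality, which follows once the triple is recognized as satisfying the characterization in Theorem \ref{ti}(3); the anticipated difficulty here is confirming that the numerical-radius condition (3) is precisely what guarantees the contractivity, which I would establish using Lemma \ref{basicnrlemma}.
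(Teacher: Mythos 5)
Your forward direction is sound and is a mild variant of the paper's argument. The paper does not identify the symbols the way you do: it first proves a separate Claim (via Fuglede's theorem applied to an $\mathbb E$-unitary extension) that every $\mathbb E$-isometry satisfies $V_2=V_1^*V_3$ as well as $V_1=V_2^*V_3$, deduces from the two identities $T_{\varphi}=T_{\psi}^*T_z$ and $T_{\psi}=T_{\varphi}^*T_z$ that $\varphi=G_1^*+G_2z$ and $\psi=G_2^*+G_1z$ for \emph{some} $G_1,G_2$, and only afterwards identifies $G_i=UA_iU^*$ by computing the fundamental operators of $(T_{\varphi}^*,T_{\psi}^*,T_z^*)$ and invoking the uniqueness in Theorem \ref{funda-exist-unique}. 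Your plan --- transporting the fundamental-operator equations of $(\hat{T_1}^*,\hat{T_2}^*,\hat{T_3}^*)$ to the model and comparing matrix entries against the projection $D_{T_z^*}=I-T_zT_z^*$ --- pins down the symbol coefficients and the identification $G_i=UA_iU^*$ simultaneously; it trades the Fuglede argument for routine (but necessary: not just "against constants") entrywise bookkeeping, and it does work.

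The genuine gap is in your converse, at the step ``the contractivity needed is supplied by (3) via the multiplier norm.'' To apply Theorem \ref{ti}(3) to $(V_1,V_2,V_3)=(T_{A_1^*+A_2z},T_{A_2^*+A_1z},T_z)$ with the relation $V_1=V_2^*V_3$ that you verify, you need $V_2=T_{A_2^*+A_1z}$ to be a contraction, i.e. $\sup_{|z|=1}\|A_2^*+A_1z\|\leq 1$. Hypothesis (3) bounds the \emph{other} symbol, $\sup\|A_1^*+A_2z\|\leq 1$, i.e. it makes $V_1$ a contraction; these are different operators, and the transfer is not automatic. Your stated fallback, Lemma \ref{basicnrlemma}, cannot close this by itself: numerical-radius manipulations only give $\omega(A_2^*+A_1z)\leq 1$, hence $\|A_2^*+A_1z\|\leq 2$, and the Toeplitz operator $T_{A_2^*+A_1z}$ on $H^2(E)$ need not be normal (think of $T_z$), so norm does not equal numerical radius there. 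This is precisely why the paper's converse never invokes Theorem \ref{ti}(3) on $H^2(E)$: it passes to $L^2(E)$, where conditions (1) and (2) make $(M_{A_1^*+A_2z},M_{A_2^*+A_1z},M_z)$ a commuting \emph{normal} triple, runs the numerical-radius argument (Lemma \ref{basicnrlemma} twice, with the real-part symmetry trick) to get $\omega(A_2^*+A_1z)\leq 1$, upgrades this to a norm bound using normality, concludes via Theorem \ref{tu}(3) that the triple on $L^2(E)$ is an $\mathbb E$-unitary, and obtains the $\mathbb E$-isometry by restriction to the invariant subspace $H^2(E)$. Your route can be rescued more cheaply by the identity $A_2^*+A_1z=z\,(A_1^*+A_2z)^*$ for $|z|=1$, which gives $\|A_2^*+A_1z\|=\|A_1^*+A_2z\|$ on the circle and hence the required bound directly from (3); alternatively, note that (1) and (2) make $A_2^*+A_1z$ pointwise normal, so the numerical-radius bound does upgrade pointwise. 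But as written, your converse attributes the crucial contraction bound to a hypothesis that does not directly supply it, and the argument you sketch for it would not close.
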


\begin{proof}
Suppose that $(\hat{T_1},\hat{T_2},\hat{T_3})$ is a pure $\mathbb
E$-isometry. Then $\hat{T_3}$ is a pure isometry and it can be
identified with the Toeplitz operator $T_z$ on $H^2(\mathcal
D_{{\hat{T_3}}^*})$. Therefore, there is a unitary $U$ from
$\mathcal H$ onto $H^2(\mathcal D_{{\hat{T_3}}^*})$ such that
$\hat{T_3}=U^*T_zU$. Since for $i=1,2,\, \;\hat{T_i}$ is a
commutant of $\hat{T_3}$, there are two multipliers $\varphi,\,
\psi$ in $H^{\infty}(\mathcal L(\mathcal D_{{\hat{T_3}}^*}))$ such
that
$\hat{T_1}=U^*T_{\varphi}U$ and $\hat{T_2}=U^*T_{\psi}U$.\\

\noindent \textit{Claim.} If $(V_1,V_2,V_3)$ on a Hilbert space
$\mathcal H_1$ is an $\mathbb E$-isometry then $V_2=V_1^*V_3$.

\noindent \textit{Proof of Claim.} Let $(V_1,V_2,V_3)$ be the
restriction of an $\mathbb E$-unitary $(N_1,N_2,N_3)$ to the
common invariant subspace $\mathcal H_1$. By part-(3) of Theorem
\ref{tu} we have that $N_3$ is a unitary and $N_1=N_2^*N_3$.
Therefore, $N_1^*=N_3^*N_2$ and hence $N_1^*=N_2N_3^*$ by an
application of Fuglede's theorem, \cite{Fuglede}, which states
that if a normal operator $N$ commutes with a bounded operator $T$
then it commutes with $T^*$ too. Also since $N_3$ is a unitary we
have that $N_2=N_1^*N_3$. Now $\mathcal H_1$ is an invariant
subspace for $N_2$ and thus $\mathcal H_1$ is invariant under
$N_1^*N_3$. So $V_2=N_2|_{\mathcal H_1}=N_1^*N_3|_{\mathcal H_1}$.
Again $\mathcal H_1$ is invariant under $N_3$. Therefore,
$N_1^*(N_3(\mathcal H_1))\subseteq \mathcal H_1$. So we have that
$P_{\mathcal H_1}N_1^*|_{N_3(\mathcal H_1)}=N_1^*|_{N_3(\mathcal
H_1)}$. Again $V_1^*=P_{\mathcal H_1}N_1^*|_{\mathcal H_1}$.
Therefore, $N_1^*N_3|_{\mathcal H_1}=V_1^*V_3$. So, we have that
$V_2=V_1^*V_3$.\\

We apply this claim and part-(3) of Theorem \ref{ti} to the
$\mathbb E$-isometry $(T_{\varphi},T_{\psi},T_z)$ to get
$T_{\varphi}=T_{\psi}^*T_z$ and $T_{\psi}=T_{\varphi}^*T_z$ and by
these two relations we have that
\[
\varphi(z)=G_1^*+G_2z \textup{ and } \psi(z)=G_2^*+G_1z \textup{
for some } G_1,G_2 \in\mathcal L(\mathcal D_{{\hat{T_3}}^*}).
\]
By the commutativity of $\varphi(z)$ and $\psi(z)$ we obtain
$$ [G_1,G_2]=0 \textup{ and } [G_1^*,G_1]=[G_2^*,G_2].$$
We now compute the fundamental operators of the $\mathbb
E$-co-isometry $(T_{\varphi}^*,T_{\psi}^*,T_z^*)$ that is of
$(T_{G_1^*+G_2z}^*,T_{G_2^*+G_1z}^*,T_z^*)$. Clearly $I-T_zT_z^*$
is the projection onto the space $\mathcal D_{T_z^*}$. Now
\[
T_{G_1^*+G_2z}^*- T_{G_2^*+G_1z}T_z^*=T_{G_1+G_2^* \bar z}-
T_{G_2^*+G_1z}T_{\bar z}= T_{G_1}=(I-T_zT_z^*)G_1(I-T_zT_z^*).
\]
Similarly,
\[
T_{G_2^*+G_1z}^*-T_{G_1^*+G_2z}T_z^* =(I-T_zT_z^*)G_2(I-T_zT_z^*).
\]
Therefore, $G_1,G_2$ are the fundamental operators of
$(T_{\varphi}^*,T_{\psi}^*,T_z^*)$. The fundamental operators of
$(\hat{T_1}^*,\hat{T_2}^*,\hat{T_3}^*)$ are $A_1,A_2$. Therefore
\[
\hat{T_1}^*-\hat{T_2}\hat{T_3}^*=D_{\hat{T_3}^*}A_1D_{\hat{T_3}^*}
\]
that is
\[
U^*(T_{\varphi}^*-T_{\psi}T_z^*)U=U^*D_{T_z^*}(UA_1U^*)D_{T_z}^*U
\]
or equivalently
\[
T_{\varphi}^*-T_{\psi}T_z^*=D_{T_z^*}(UA_1U^*)D_{T_z}^*.
\]
Similarly,
\[
T_{\psi}^*-T_{\varphi}T_z^*=D_{T_z^*}(UA_2U^*)D_{T_z}^*.
\]
Therefore, by the uniqueness of fundamental operators (see Theorem
\ref{funda-exist-unique}) we have that
\[
G_1=UA_1U^* \text{ and } G_2=UA_2U^*.
\]
From $[G_1,G_2]=0$ and $[G_1^*,G_1]=[G_2^*,G_2]$ it trivially
follows that $[A_1,A_2]=0$ and $[A_1^*,A_1]=[A_2^*,A_2]$. Also
since $(T_{\varphi},T_{\psi},T_z)$ is an $\mathbb E$-contraction,
we have that $\|T_{\varphi}\|\leq 1$ and hence
$\|\varphi(z)\|=\|G_1^*+G_2z\|\leq 1$ for all $z\in{\mathbb D}$.
Therefore, $\| A_1^*+A_2z\|=\|U^*(G_1^*+G_2)U \|\leq 1$ for all
$z\in{\mathbb D}$.\\

For the converse, we first prove that the triple of multiplication
operators $(M_{A_1^*+A_2z},M_{A_2^*+A_1z},M_z)$ on $L^2(E)$ is an
$\mathbb E$-unitary when $A_1,A_2$ satisfy the given conditions.
It is evident that $(M_{A_1^*+A_2z},M_{A_2^*+A_1z},M_z)$ is a
commuting triple of normal operators when $[A_1,A_2]=0$ and
$[A_1^*,A_1]=[A_2^*,A_2]$. Also
$M_{A_1^*+A_2z}=M_{A_2^*+A_1z}^*M_z$ and $M_z$ on $L^2(E)$ is
unitary. Therefore, by part-(3) of Theorem \ref{tu},
$(M_{A_1^*+A_2z},M_{A_2^*+A_1z},M_z)$ becomes an $\mathbb
E$-unitary if we prove that $\| M_{A_2^*+A_1z} \|\leq 1$ for all
$z\in\mathbb T$. We have that $\omega(A_1+zA_2)\leq 1$ for every
$z\in\mathbb T$, which is same as saying that
$\omega(z_1A_1+z_2A_2)\leq 1$ for all complex numbers $z_1,z_2$ of
unit modulus. Thus by Lemma \ref{basicnrlemma},
$$ (z_1A_1+z_2A_2)+(z_1A_1+z_2A_2)^*\leq 2I, $$
that is
$$ (z_1A_1+\bar{z_2}A_2^*)+(z_1A_1+\bar{z_2}A_2^*)^* \leq 2I.$$
Therefore, $\bar{z_2}(A_2^*+zA_1)+z_2(A_2^*+zA_1)^*\leq 2I$ for
all $z,z_2 \in\mathbb T$. This is same as saying that
$$ \textup{Re }z_2(A_2^*+zA_1)\leq I, \textup{ for all } z,z_2 \in\mathbb T. $$
Therefore, by Lemma \ref{basicnrlemma} again
$\omega(A_2^*+A_1z)\leq 1$ for any $z$ in $\mathbb T$. Since
$M_{A_2^*+A_1z}$ is a normal operator we have that $\|
M_{A_2^*+A_1z} \|=\omega(M_{A_2^*+A_1z})$ and thus $\|
M_{A_2^*+A_1z} \|$ for all $z\in\mathbb T$. Therefore,
$(M_{A_1^*+A_2z},M_{A_2^*+A_1z},M_z)$ on $L^2(E)$ is an $\mathbb
E$-unitary and hence $(T_{A_1^*+A_2z},T_{A_2^*+A_1z},T_z)$, being
the restriction of $(M_{A_1^*+A_2z},M_{A_2^*+A_1z},M_z)$ to the
common invariant subspace $H^2(E)$, is an $\mathbb E$-isometry.
Also $T_z$ on $H^2(E)$ is a pure isometry. Thus we conclude that
$(T_{A_1^*+A_2z},T_{A_2^*+A_1z},T_z)$ is a pure $\mathbb
E$-isometry.

\end{proof}

\section{A necessary condition for the existence of dilation}

Let us recall from section 1 the definitions of the $\mathbb
E$-isometric and $\mathbb E$-unitary dilations of an $\mathbb
E$-contraction. In fact they can be defined in a simpler way by
involving polynomials only. This is because the polynomials are
dense in the rational functions.
\begin{defn}
Let $(T_1,T_2,T_3)$ be a $\mathbb E$-contraction on $\mathcal H$.
A commuting tuple $(Q_1,Q_2,V)$ on $\mathcal K$ is said to be an
$\mathbb E$-isometric dilation of $(T_1,T_2,T_3)$ if $\mathcal H
\subseteq \mathcal K$, $(Q_1,Q_2,V)$ is an $\mathbb E$-isometry
and
$$ P_{\mathcal H}(Q_1^{m_1}Q_2^{m_2}V^n)|_{\mathcal H}=T_1^{m_1}T_2^{m_2}T_3^n,
\; \textup{ for all non-negative integers }m_1,m_2,n.
$$ Here $P_{\mathcal H}:\mathcal K \rightarrow \mathcal H$
is the orthogonal projection of $\mathcal K$ onto $\mathcal H$.
Moreover, the dilation is called {\em minimal} if
\[
\mathcal K=\overline{\textup{span}}\{ Q_1^{m_1}Q_2^{m_2}V^n h\,:\;
h\in\mathcal H \textup{ and }m_1,m_2,n\in \mathbb N \cup \{0\} \}.
\]
\end{defn}
\begin{defn}
A commuting tuple $(R_1,R_2,U)$ on $\mathcal K$ is said to be an
$\mathbb E$-unitary dilation of $(T_1,T_2,T_3)$ if $\mathcal H
\subseteq \mathcal K$, $(R_1,R_2,U)$ is an $\mathbb E$-unitary and
\[
P_{\mathcal H}(R_1^{m_1}R_2^{m_2}U^n)|_{\mathcal
H}=T_1^{m_1}T_2^{m_2}T_3^n, \; \textup{ for all non-negative
integers }m_1,m_2,n.
\]
Moreover, the dilation is called {\em
minimal} if
\[
\mathcal K=\overline{\textup{span}}\{ R_1^{m_1}R_2^{m_2}U^n h\,:\;
h\in\mathcal H \textup{ and }m_1,m_2,n\in \mathbb Z \}.
\]
Here $R_i^{m_i}={R_i^*}^{-m_i}$ for $i=1,2$ and $U^n={U^*}^{-n}$
when $m_i$ and $n$ are negative integers.
\end{defn}

\begin{prop}\label{exist-minimal}
If a $\mathbb E$-contraction $(T_1,T_2,T_3)$ defined on $\mathcal
H$ has a $\mathbb E$-isometric dilation, then it has a minimal
$E$-isometric dilation.
\end{prop}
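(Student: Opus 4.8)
The plan is to start from the given $\mathbb E$-isometric dilation $(Q_1,Q_2,V)$ of $(T_1,T_2,T_3)$ on a space $\mathcal K \supseteq \mathcal H$ and to carve out the smallest subspace that still carries a dilation. First I would define
\[
\mathcal K_0 = \overline{\textup{span}}\{ Q_1^{m_1}Q_2^{m_2}V^n h \,:\; h\in\mathcal H,\; m_1,m_2,n\in\mathbb N\cup\{0\} \},
\]
which is exactly the span appearing in the definition of minimality. The key structural observation is that $\mathcal K_0$ is invariant under $Q_1,Q_2,V$: applying any of the three generators to a spanning vector $Q_1^{m_1}Q_2^{m_2}V^n h$ just increases one of the exponents, using the commutativity of the triple $(Q_1,Q_2,V)$ to keep the word in the standard ordered form $Q_1^{\ast}Q_2^{\ast}V^{\ast}$. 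Thus $\mathcal K_0$ reduces the problem to checking that the restrictions $Q_i|_{\mathcal K_0}, V|_{\mathcal K_0}$ again form an $\mathbb E$-isometry.

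Next I would verify that $(Q_1|_{\mathcal K_0}, Q_2|_{\mathcal K_0}, V|_{\mathcal K_0})$ is an $\mathbb E$-isometry. The clean way to do this is via part-(3) of Theorem \ref{ti}: an $\mathbb E$-isometry is characterized by $V$ being an isometry, $Q_2$ a contraction, and $Q_1=Q_2^*V$. That $V|_{\mathcal K_0}$ is an isometry and $Q_2|_{\mathcal K_0}$ a contraction is immediate from restricting to an invariant subspace. The relation $Q_1=Q_2^*V$ is more delicate because $Q_2^*$ need not leave $\mathcal K_0$ invariant; however, by the Claim proved inside Theorem \ref{model1}, any $\mathbb E$-isometry automatically satisfies $Q_2 = Q_1^* V$ as well, so on $\mathcal K_0$ I can instead verify the equivalent relation coming from the compressed picture. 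Concretely, since $\mathcal H \subseteq \mathcal K_0$ and $\mathcal K_0$ is $(Q_1,Q_2,V)$-invariant, the compression formula
\[
P_{\mathcal H}(Q_1^{m_1}Q_2^{m_2}V^n)|_{\mathcal H}
= P_{\mathcal H}\big((Q_1|_{\mathcal K_0})^{m_1}(Q_2|_{\mathcal K_0})^{m_2}(V|_{\mathcal K_0})^n\big)|_{\mathcal H}
= T_1^{m_1}T_2^{m_2}T_3^n
\]
holds, so the restricted triple still dilates $(T_1,T_2,T_3)$.

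The main obstacle I anticipate is the verification that the restriction to $\mathcal K_0$ is genuinely an $\mathbb E$-isometry and not merely an $\mathbb E$-contraction with an isometric third coordinate; the subtlety is that checking $Q_1=Q_2^*V$ on $\mathcal K_0$ requires controlling $Q_2^*$, which does not preserve $\mathcal K_0$. I would handle this by appealing to the equivalence between parts (1) and (2) of Theorem \ref{ti}: it suffices to show $V|_{\mathcal K_0}$ is an isometry and that the restricted triple is an $\mathbb E$-contraction. The $\mathbb E$-contraction property passes to the invariant subspace $\mathcal K_0$ because for any holomorphic polynomial $f$ one has $f(Q_1,Q_2,V)\mathcal K_0 \subseteq \mathcal K_0$ and hence $\|f(Q_1|_{\mathcal K_0},Q_2|_{\mathcal K_0},V|_{\mathcal K_0})\| \leq \|f(Q_1,Q_2,V)\| \leq \|f\|_{\infty,\overline{\mathbb E}}$, invoking Lemma \ref{simpler}. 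Since $V|_{\mathcal K_0}$ is plainly an isometry, Theorem \ref{ti} upgrades the restricted triple to an $\mathbb E$-isometry, and by construction the dilation on $\mathcal K_0$ is minimal. This completes the proposed argument.
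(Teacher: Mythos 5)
Your proposal is correct and follows essentially the same route as the paper: restrict $(Q_1,Q_2,V)$ to the invariant subspace $\mathcal K_0$, note that the restricted triple is an $\mathbb E$-contraction with isometric third component, and invoke the equivalence of parts (1) and (2) of Theorem \ref{ti}. Your detour through part (3) and the Claim from Theorem \ref{model1} is unnecessary (and a bit muddled), but you correctly abandon it in favor of the paper's argument, and you even supply the justification, via Lemma \ref{simpler}, for why the $\mathbb E$-contraction property passes to invariant subspaces, which the paper merely asserts.
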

\begin{proof}
Let $(Q_1,Q_2,V)$ on $\mathcal K\supseteq \mathcal H$ be a
$\mathbb E$-isometric dilation of $(T_1,T_2,T_3)$. Let $\mathcal
K_0$ be the space defined as
\[
\mathcal K_0=\overline{\textup{span}}\{ Q_1^{m_1}Q_2^{m_2}V^n
h\,:\; h\in\mathcal H \textup{ and }m_1,m_2,n\in \mathbb N \cup
\{0\} \}.
\]
Clearly $\mathcal K_0$ is invariant under $Q_1^{m_1},
Q_2^{m_2}$ and $V^n$, for any non-negative integer $m_1,m_2$ and
$n$. Therefore if we denote the restrictions of $Q_1,Q_2$ and $V$
to the common invariant subspace $\mathcal K_0$ by $Q_{11},
Q_{12}$ and $V_1$ respectively, we get
\[
Q_{11}^{m_1}k=Q_1^{m_1}k, \, Q_{12}^{m_2}k=Q_2^{m_2}k, \textup{
and } V_1^nk=V^nk,\quad \text{ for any }k\in\mathcal K_0.
\]
Hence
\[
\mathcal K_0=\overline{\textup{span}}\{
Q_{11}^{m_1}Q_{12}^{m_2}V_1^n h\,:\; h\in\mathcal H \textup{ and
}m_1,m_2,n\in \mathbb N \cup \{0\} \}.
\]
Therefore for any
non-negative integers $m_1,m_2$ and $n$ we have that
\[
P_{\mathcal
H}(Q_{11}^{m_1}Q_{12}^{m_2}V_1^{n})h=P_{\mathcal
H}(Q_1^{m_1}Q_2^{m_2}V^n )h, \quad \textup{ for all }h\in\mathcal
H .
\]
Now $(Q_{11},Q_{12},V_1)$ is an $\mathbb E$-contraction by being
the restriction of an $\mathbb E$-contraction $(Q_1,Q_2,V)$ to a
common invariant subspace $\mathcal K_0$. Also $V_1$, being the
restriction of an isometry to an invariant subspace, is also an
isometry. Therefore by Theorem \ref{ti} - part(2),
$(Q_{11},Q_{12},V_1)$ is an $\mathbb E$-isometry. Hence
$(Q_{11},Q_{12},V_1)$ is a minimal $\mathbb E$-isometric dilation
of $(T_1,T_2,T_3)$.

\end{proof}

\begin{prop}\label{dilation-extension}
Let $(Q_1,Q_2,V)$ on $\mathcal K$ be an $\mathbb E$-isometric
dilation of an $\mathbb E$-contraction $(T_1,T_2,T_3)$ on
$\mathcal H$. If $(Q_1,Q_2,V)$ is minimal, then
$(Q_1^*,Q_2^*,V^*)$ is an $\mathbb E$-co-isometric extension of
$(T_1^*,T_2^*,T_3^*)$.
\end{prop}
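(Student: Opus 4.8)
The plan is to prove the two halves of the conclusion separately and observe that one of them is essentially free. The statement "$(Q_1^*,Q_2^*,V^*)$ is an $\mathbb E$-co-isometry" holds by definition, since its adjoint triple $(Q_1,Q_2,V)$ is assumed to be an $\mathbb E$-isometry; no spectral-set computation is needed here, so the whole content is the \emph{extension} part. Thus the real goal is to show that $\mathcal H$ is a common invariant subspace of $Q_1^*,Q_2^*,V^*$ and that the restrictions satisfy $Q_1^*|_{\mathcal H}=T_1^*$, $Q_2^*|_{\mathcal H}=T_2^*$ and $V^*|_{\mathcal H}=T_3^*$. Notably, I do not expect to need the relation $Q_1=Q_2^*V$ from Theorem \ref{ti}; commutativity of the triple, the dilation identity, and minimality should suffice.

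I would first record the two ingredients used repeatedly. From the dilation relation $P_{\mathcal H}(Q_1^{a}Q_2^{b}V^{c})|_{\mathcal H}=T_1^{a}T_2^{b}T_3^{c}$ one gets, for $h,h'\in\mathcal H$ and $a,b,c\geq 0$, the scalar identity $\langle Q_1^{a}Q_2^{b}V^{c}h',h\rangle=\langle T_1^{a}T_2^{b}T_3^{c}h',h\rangle$, equivalently $\langle h,Q_1^{a}Q_2^{b}V^{c}h'\rangle=\langle h,T_1^{a}T_2^{b}T_3^{c}h'\rangle$. The second ingredient is minimality, which says precisely that the vectors $k=Q_1^{m_1}Q_2^{m_2}V^{n}h'$ (over $h'\in\mathcal H$, $m_1,m_2,n\geq0$) have dense span in $\mathcal K$.

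The heart of the argument is then a single computation, carried out for each $S\in\{Q_1,Q_2,V\}$ with the corresponding $T\in\{T_1,T_2,T_3\}$. Fixing $h\in\mathcal H$ and a spanning vector $k=Q_1^{m_1}Q_2^{m_2}V^{n}h'$, commutativity of $(Q_1,Q_2,V)$ turns $Sk$ back into a monomial of the standard form, so the dilation identity gives $\langle S^*h,k\rangle=\langle h,Sk\rangle=\langle h,T\,T_1^{m_1}T_2^{m_2}T_3^{n}h'\rangle$. On the other hand $T^*h\in\mathcal H$, so $\langle T^*h,k\rangle=\langle T^*h,P_{\mathcal H}k\rangle=\langle T^*h,T_1^{m_1}T_2^{m_2}T_3^{n}h'\rangle=\langle h,T\,T_1^{m_1}T_2^{m_2}T_3^{n}h'\rangle$. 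Hence $\langle S^*h,k\rangle=\langle T^*h,k\rangle$ for every spanning $k$, whence $S^*h=T^*h$. In particular $S^*h\in\mathcal H$ and $S^*|_{\mathcal H}=T^*$; applying this to $S=Q_1,Q_2,V$ gives the desired common invariance of $\mathcal H$ under the adjoints and the correct restrictions, completing the proof.

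The one step demanding care is the passage from the inner-product equality to the operator equality $S^*h=T^*h$: this is exactly where minimality is indispensable, because $S^*h-T^*h$ lies in $\mathcal K$ and is orthogonal to a dense subspace, hence must vanish. I expect no genuine obstacle beyond the bookkeeping needed to confirm that $Sk$ is again a monomial $Q_1^{\bullet}Q_2^{\bullet}V^{\bullet}$ (using commutativity) and that the $T$-side monomial is correspondingly $T\,T_1^{m_1}T_2^{m_2}T_3^{n}$.
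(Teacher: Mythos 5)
Your proof is correct and takes essentially the same route as the paper: both arguments apply the dilation identity to the spanning monomials $Q_1^{m_1}Q_2^{m_2}V^n h'$ and use minimality (density of their span) to pass from equality of inner products to $Q_i^*h=T_i^*h$, which simultaneously gives invariance of $\mathcal H$ under the adjoints and the restriction identities. The only organizational difference is that the paper first isolates the intertwining relation $T_iP_{\mathcal H}=P_{\mathcal H}Q_i$ as an intermediate step and then deduces the adjoint statement, whereas you inline that computation directly against the spanning vectors.
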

\begin{proof}
We first prove that $T_1P_{\mathcal H}=P_{\mathcal H}Q_1,
T_2P_{\mathcal H}=P_{\mathcal H}Q_2$ and $T_3P_{\mathcal
H}=P_{\mathcal H}V$. Clearly
\[
\mathcal K=\overline{\textup{span}}\{ Q_1^{m_1}Q_2^{m_2}V^n h\,:\;
h\in\mathcal H \textup{ and }m_1,m_2,n\in \mathbb N \cup \{0\} \}.
\]
Now for $h\in\mathcal H$ we have that
\begin{align*} T_1P_{\mathcal
H}(Q_1^{m_1}Q_2^{m_2}V^n h) =T_1(T_1^{m_1}T_2^{m_2}T_3^n h)
=T_1^{m_1+1}T_2^{m_2}T_3^n h & =P_{\mathcal
H}(Q_1^{m_1+1}Q_2^{m_2}V^n h)\\& =P_{\mathcal
H}Q_1(Q_1^{m_1}Q_2^{m_2}V^n h).
\end{align*}
Thus we have that $T_1P_{\mathcal H}=P_{\mathcal H}Q_1$ and
similarly we can prove that $T_2P_{\mathcal H}=P_{\mathcal H}Q_2$
and $T_3P_{\mathcal H}=P_{\mathcal H}V$. Also for $h\in\mathcal H$
and $k\in\mathcal K$ we have that
\begin{align*}
\langle T_1^*h,k \rangle
=\langle P_{\mathcal H}T_1^*h,k \rangle =\langle
T_1^*h,P_{\mathcal H}k \rangle  =\langle h,T_1P_{\mathcal H}k
\rangle =\langle h,P_{\mathcal H}Q_1k \rangle =\langle Q_1^*h,k
\rangle .
\end{align*}
Hence $T_1^*=Q_1^*|_{\mathcal H}$ and similarly
$T_2^*=Q_2^*|_{\mathcal H}$ and $T_3^*=V^*|_{\mathcal H}$.
Therefore, $(Q_1^*,Q_2^*,V^*)$ is an $\mathbb E$-co-isometric
extension of $(T_1^*,T_2^*,T_3^*)$.

\end{proof}

\begin{prop}\label{ultimate}
Let $\mathcal H_1$ be a Hilbert space and let $(T_1,T_2,T_3)$ be
an $\mathbb E$-contraction on $\mathcal H=\mathcal H_1\oplus
\mathcal H_1$ with fundamental operators $A_1,A_2$. Let
\begin{itemize}
 \item[(i)] $ Ker (D_{T_3})=\mathcal H_1\oplus \{0\} \textup{ and } \mathcal D_{T_3} =
 \{0\}\oplus \mathcal H_1\, ;$
\item[(ii)] $T_3(\mathcal D_{T_3})=\{0\}$ and $T_3
Ker(D_{T_3})\subseteq \mathcal D_{T_3}$.
\end{itemize}
If $(T_1^*,T_2^*,T_3^*)$ has an $\mathbb E$-isometric dilation
then
\begin{enumerate}
\item $A_1A_2=A_2A_1$, \item
$A_1^*A_1-A_1A_1^*=A_2^*A_2-A_2A_2^*$.
\end{enumerate}
\end{prop}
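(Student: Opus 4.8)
The plan is to reduce to a \emph{pure} $\mathbb E$-isometric dilation, read the two identities off the functional model of Theorem \ref{model1}, and then transport them to $A_1,A_2$ by a compression.

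First I would record what the hypotheses say and produce the dilation. In the splitting $\mathcal H=\mathcal H_1\oplus\mathcal H_1$, conditions (i)--(ii) force $T_3=\begin{pmatrix}0&0\\ F&0\end{pmatrix}$ with $F$ an isometry, so that $T_3^2=0$. By Proposition \ref{exist-minimal} an $\mathbb E$-isometric dilation of $(T_1^*,T_2^*,T_3^*)$ may be taken minimal, say $(Q_1,Q_2,V)$ on $\mathcal K$, and by Proposition \ref{dilation-extension} the adjoint triple $(Q_1^*,Q_2^*,V^*)$ is an $\mathbb E$-co-isometric extension of $(T_1,T_2,T_3)$; thus $\mathcal H$ is invariant under $Q_1^*,Q_2^*,V^*$ with $Q_1^*|_{\mathcal H}=T_1$, $Q_2^*|_{\mathcal H}=T_2$ and $V^*|_{\mathcal H}=T_3$. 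Since $V^*|_{\mathcal H}=T_3$ and $T_3^2=0$, we get $\mathcal H\subseteq\mathrm{Ker}\,V^{*2}$; on the $\mathbb E$-unitary summand of the Wold decomposition of $V$ (Theorem \ref{ti}) the operator $V^{*2}$ is injective, so $\mathcal H$ is orthogonal to it, and minimality forces $(Q_1,Q_2,V)$ to be a pure $\mathbb E$-isometry.

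Now I would apply Theorem \ref{model1}: it identifies $(Q_1,Q_2,V)$ with $(T_\varphi,T_\psi,T_z)$ on $H^2(E)$, $E=\mathcal D_{V^*}$, with $\varphi=G_1^*+G_2z$, $\psi=G_2^*+G_1z$, where the fundamental operators $G_1,G_2$ of $(Q_1^*,Q_2^*,V^*)$ already satisfy $[G_1,G_2]=0$ and $[G_1^*,G_1]=[G_2^*,G_2]$. Under this identification $\mathcal H\subseteq\mathrm{Ker}\,T_z^{*2}=E\oplus Ez$, and I set $E_0:=\mathcal H\cap E$. The identity $\|D_{T_3}h\|=\|D_{V^*}h\|$ for $h\in\mathcal H$ (immediate from $V^*|_{\mathcal H}=T_3$) shows that $D_{T_3}h\mapsto D_{V^*}h$ identifies $\mathcal D_{T_3}$ with $E_0$; comparing the compression to $\mathcal H$ of $Q_1^*-Q_2V^*=D_{V^*}G_1D_{V^*}$ with $T_1-T_2^*T_3=D_{T_3}A_1D_{T_3}$ then gives $A_i=P_{E_0}G_i|_{E_0}$. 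Using the explicit action of $T_\varphi^*,T_\psi^*$ on $E\oplus Ez$ together with $T_3^2=0$, I would check that $E_0$ is moreover \emph{invariant} under $G_1$ and $G_2$, whence $A_i=G_i|_{E_0}$. Relation (1) is then immediate, $A_1A_2=(G_1G_2)|_{E_0}=(G_2G_1)|_{E_0}=A_2A_1$.

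The hard part is relation (2). With respect to $E=E_0\oplus E_0^{\perp}$ write $G_i=\begin{pmatrix}A_i&b_i\\0&d_i\end{pmatrix}$; the $E_0$-corner of $[G_1^*,G_1]=[G_2^*,G_2]$ then reads $[A_1^*,A_1]-b_1b_1^*=[A_2^*,A_2]-b_2b_2^*$, so relation (2) is \emph{equivalent} to the defect identity $b_1b_1^*=b_2b_2^*$. This is where the hypotheses must be used in full: they force $A_i$ and $b_i^*$ to annihilate the subspace $E_1:=\{b\in E: bz\in\mathcal H\}\subseteq E_0$, while minimality of the dilation gives $E=\bigvee\{\,w(G_1^*,G_2^*)E_0: w\ \text{a word}\,\}$. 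Feeding these, together with the off-diagonal and $E_0^{\perp}$-blocks of the two commutator relations, into the defect identity is, I expect, the genuine obstacle; the reductions preceding it are routine once the functional model is available.
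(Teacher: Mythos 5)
Your reductions up to the crux are sound and run parallel to the paper's own proof: pass to a minimal dilation (Propositions \ref{exist-minimal} and \ref{dilation-extension}), note that (i)--(ii) force $T_3$ to vanish on $\mathcal D_{T_3}$ and map $\textup{Ker}(D_{T_3})$ isometrically into $\mathcal D_{T_3}$, hence $T_3^2=0$, then invoke Theorem \ref{model1}. Your purity argument ($\mathcal H\subseteq \textup{Ker}\,V^{*2}$, injectivity of $V^{*2}$ on the unitary Wold summand, then minimality) is correct and in fact tidier than the paper, which never kills the unitary summand $\mathcal K_1$ outright but instead shows inside its Claims 1--2 that the $\mathcal K_1$-components of vectors of $\mathcal H$ vanish. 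One caveat: the norm identity $\|D_{T_3}h\|=\|D_{V^*}h\|$ by itself identifies $\mathcal D_{T_3}$ with $\overline{P_E\mathcal H}$, not with $E_0=\mathcal H\cap E$; to see these agree you must use (i)--(ii) as the paper does (from $T_3\mathcal D_{T_3}=\{0\}$ one gets $\mathcal D_{T_3}\subseteq E$, and from $T_3$ being isometric on $\textup{Ker}(D_{T_3})$ with range in $\mathcal D_{T_3}$ one gets $\textup{Ker}(D_{T_3})\subseteq Ez$, so $\mathcal H$ splits as $(\mathcal H\cap E)\oplus(\mathcal H\cap Ez)$). Granting that, your $G_i$-invariance of $E_0$, the identification $A_i=G_i|_{E_0}$, and relation (1) all go through, matching the paper's Claims 3--4.

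The genuine gap is relation (2), and you say so yourself: you correctly reduce it to the defect identity $b_1b_1^*=b_2b_2^*$ for the corners $b_i=P_{E_0}G_i|_{E\ominus E_0}$, and then stop, offering only a list of ingredients you ``expect'' to suffice. The idea you are missing --- and it is the heart of the paper's Claim 3 --- is that the hypotheses give strictly more than invariance: $E_0$ is invariant under $G_1^*$ and $G_2^*$ as well, i.e.\ $E_0$ \emph{reduces} $G_1$ and $G_2$, so that $b_1=b_2=0$ and relation (2) is literally the $E_0$-corner of $[G_1^*,G_1]=[G_2^*,G_2]$; there is nothing left to estimate. The paper gets this adjoint invariance from the co-isometric extension once more: for $c_0\in E_0=\mathcal D_{T_3}$ it computes $T_3^*c_0=c_0z\in\mathcal H$, and then applies $Q_2^*=T_\psi^*$ and $Q_1^*=T_\varphi^*$ (both of which leave $\mathcal H$ invariant) to this vector; the constant Fourier coefficients of the resulting elements of $\mathcal H$ are $G_1^*c_0$ and $G_2^*c_0$ respectively, and constant coefficients of elements of $\mathcal H$ lie in $E_0$ by the splitting above. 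Your alternative ingredients (minimality generating $E$ from $E_0$, annihilation of $E_1$) are not shown to yield $b_1b_1^*=b_2b_2^*$, and I see no easy way to make them do so; without an argument that the $b_i$ actually vanish, the proposal proves (1) but not (2), i.e.\ it is incomplete at exactly the step the proposition exists to deliver.
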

\begin{proof}
Let $(Q_1,Q_2,V)$ on a Hilbert space $\mathcal K$ be a minimal
$\mathbb E$-isometric dilation of $(T_1^*,T_2^*,T_3^*)$ (such a
minimal $\mathbb E$-isometric dilation exists by Proposition
\ref{exist-minimal}) so that $(Q_1^*,Q_2^*,V^*)$ is an $\mathbb
E$-co-isometric extension of $(T_1,T_2,T_3)$ by Proposition
\ref{dilation-extension}. Since $(Q_1,Q_2,V)$ on $\mathcal K$ is
an $E$-isometry, by part-(4) of Theorem \ref{ti}, $\mathcal K$ has
decomposition $\mathcal K=\mathcal K_1\oplus \mathcal K_2$ into
reducing subspaces $\mathcal K_1, \mathcal K_2$ of $Q_1,Q_2,V$
such that $(Q_1|_{\mathcal K_1},Q_2|_{\mathcal K_1},V|_{\mathcal
K_1})=(Q_{11},Q_{12},U_1)$ is an $\mathbb E$-unitary and
$(Q_1|_{\mathcal K_2},Q_2|_{\mathcal K_2},V|_{\mathcal
K_2})=(Q_{21},Q_{22},V_1)$ is a pure $\mathbb E$-isometry. Since
$(Q_{21},Q_{22},V_1)$ on $\mathcal K_2$ is a pure $\mathbb
E$-isometry, by Theorem \ref{model1}, $\mathcal K_2$ can be
identified with $H^2(E)$, where $E=\mathcal D_{V_1^*}$ and
$Q_{21},Q_{22},V_1$ can be identified with
$T_{\varphi},T_{\psi},T_z$ respectively on $H^2(E)$, where
$\varphi(z)=A+Bz$ and $\psi(z)=B^*+A^*z, \; z\in\mathbb D.$ Here
$A^*,B$ are the fundamental operators of
$(Q_{21}^*,Q_{22}^*,V_1^*)$. Again $H^2(E)$ can be identified with
$l^2(E)$ and $T_{\varphi},T_{\psi},T_z$ on $H^2(E)$ can be
identified with the multiplication operators
$M_{\varphi},M_{\psi},M_z$ on $l^2(E)$ respectively. So without
loss of generality we can assume that $K_2=l^2(E)$ and
$Q_{21}=M_{\varphi}, Q_{22}=M_{\psi}$ and $V_1=M_z$ on $l^2(E)$.
The block matrices of $M_{\varphi},M_{\psi},M_z$ are given by
\begin{align*}
M_{\varphi} &=\begin{bmatrix} A&0&0&\dots\\ B&A&0&\dots\\
0&B&A&\dots\\ \dots&\dots&\dots&\dots
\end{bmatrix},\; M_{\psi}=\begin{bmatrix} B^*&0&0&\dots \\ A^*&B^*&0&\dots\\
0&A^*&B^*&\dots\\\dots&\dots&\dots&\dots
 \end{bmatrix} \\ \textup{and }  M_z &=\begin{bmatrix} 0&0&0&\dots\\I&0&0&\dots\\0&I&0&\dots\\
 \dots&\dots&\dots&\dots& \end{bmatrix}.
\end{align*}
From now onward we shall consider $\mathcal H$ as a subspace of
$\mathcal K$ and $T_1,T_2,T_3$ on $\mathcal H$ as the restrictions
of $Q_1^*,Q_2^*,V^*$ respectively to $\mathcal H$.

\textit{Claim 1}. $\mathcal D_{T_3}\subseteq E \oplus \{0\}\oplus
\{0\}\oplus \cdots \subseteq l^2(E).$

\textit{Proof of claim.} Let $h=h_1\oplus h_2 \in \mathcal
D_{T_3}\subseteq \mathcal H$, where $h_1\in \mathcal K_1$ and
$h_2=(c_0,c_1,c_2,\dots)^T \in l^2(E)$. Here
$(c_0,c_1,c_2,\dots)^T$ denotes the transpose of the vector
$(c_0,c_1,c_2,\dots)$. Since $T_3(\mathcal D_{T_3})=\{0\}$, we
have that
\[
T_3h=V^*h=V^*(h_1\oplus h_2)=U_1^*h_1\oplus
M_z^*h_2=U_1^*h_1\oplus (c_1,c_2,\cdots)^T=0
\]
which implies that $ h_1=0 \textup{ and } c_1=c_2=\dots=0$.
This completes the proof of \textit{Claim 1}.\\

\textit{Claim 2}. $Ker(D_{T_3})\subseteq \{0\}\oplus E \oplus
\{0\}\oplus \{0\}\oplus \cdots \subseteq l^2(E).$

\textit{Proof of claim.} For $h=h_1\oplus h_2 \in
Ker(D_{T_3})\subseteq \mathcal H$, where $h_1\in \mathcal K_1$ and
$h_2=(c_0,c_1,c_2,\dots)^T \in l^2(E)$, we have that
\[
D_{T_3}^2h=(I-{T_3}^*{T_3})h=P_{\mathcal H}(I-VV^*)h=P_{\mathcal
H}(h_1\oplus h_2 - h_1 \oplus M_zM_z^*h_2)=0
\]
which implies that
$ P_{\mathcal H}(h_1\oplus h_2)=P_{\mathcal H}(h_1\oplus
M_zM_z^*h_2)$. Therefore,
\[
h_1\oplus (c_0,c_1,\cdots)^T = P_{\mathcal H}(h_1\oplus
(0,c_1,c_2,\cdots)^T)
\]
which further implies that $ \|h_1\oplus
(0,c_1,c_2,\cdots)^T\|\geq \|h_1 \oplus (c_0,c_1,c_2,\cdots)^T\|$.
Thus $c_0=0$. Again $T_3(Ker (D_{T_3}))\subseteq \mathcal
D_{T_3}$. Therefore, for $h_1\oplus (0,c_1,c_2,\dots)^T \in
Ker(D_{T_3})$, we have that
\[
T_3(h_1\oplus (0,c_1,c_2,\dots)^T)=U_1^*h_1 \oplus
M_z^*(0,c_1,c_2,\cdots)^T =U_1^*h_1 \oplus (c_1,c_2,\cdots)^T\in
\mathcal D_{T_3}.
\]
Then by Claim 1, $h_1=0$ and
$c_2=c_3=\dots=0$. Hence \textit{Claim 2} is established.\\

Now since $\mathcal H=\mathcal D_{T_3} \oplus Ker(D_{T_3})$, we
can conclude that $\mathcal H \subseteq E\oplus E\oplus
\{0\}\oplus \{0\}\oplus \cdots \subseteq l^2(E)=\mathcal K_2$.
Therefore, $(M_{\varphi}^*,M_{\psi}^*,M_z^*)$ on $l^2(E)$ is an
$\mathbb E$-co-isometric extension of $(T_1,T_2,T_3)$. We now
compute the fundamental operators of
$(M_{\varphi}^*,M_{\psi}^*,M_z^*)$.
\begin{align*}
& M_{\varphi}^*-M_{\psi}M_z^*
\\& =\begin{bmatrix} A^*&B^*&0&\cdots\\ 0&A^*&B^*&\cdots\\
0&0&A^*&\cdots\\ \vdots&\vdots&\vdots&\ddots
\end{bmatrix} - \begin{bmatrix} B^*&0&0&\dots\\ A^*&B^*&0&\cdots\\ 0&A^*&B^*&\cdots\\
\vdots&\vdots&\vdots&\ddots \end{bmatrix}
\begin{bmatrix} 0&I&0&\cdots \\ 0&0&I&\cdots \\ 0&0&0&\cdots \\ \vdots&\vdots&\vdots&\ddots
\end{bmatrix} \\& = \begin{bmatrix} A^*&B^*&0&\cdots\\ 0&A^*&B^*&\cdots\\
0&0&A^*&\cdots\\ \vdots&\vdots&\vdots&\ddots
\end{bmatrix}
- \begin{bmatrix} 0&B^*&0&\cdots\\ 0&A^*&B^*&\cdots\\
0&0&A^*&\cdots\\ \vdots&\vdots&\vdots&\ddots
\end{bmatrix} \\& = \begin{bmatrix} A^*&0&0&\cdots \\ 0&0&0&\cdots \\ 0&0&0&\cdots \\
\vdots&\vdots&\vdots&\ddots \end{bmatrix}.
\end{align*}

Similarly $$ M_{\psi}^* - M_{\varphi}M_z^* = \begin{bmatrix} B&0&0&\cdots \\ 0&0&0&\cdots \\ 0&0&0&\cdots \\
\vdots&\vdots&\vdots&\ddots \end{bmatrix}. $$
Also
\begin{align*} D_{M_z^*}^2 &=I-M_zM_z^* \\
\\& = \begin{bmatrix} I&0&0&\cdots \\ 0&0&0&\cdots\\
0&0&0&\cdots \\ \vdots&\vdots&\vdots&\ddots
\end{bmatrix} .
\end{align*}

Therefore, $\mathcal D_{M_z^*}=E\oplus \{0\}\oplus \{0\}\cdots$
and $D_{M_z^*}^2=D_{M_z^*}=I_d$ on $E\oplus \{0\}\oplus
\{0\}\cdots$. If we set
\begin{align}\label{011}
\hat{A_1}=\begin{bmatrix} A^*&0&0&\dots\\ 0&0&0&\dots\\
0&0&0&\dots\\ \dots&\dots&\dots&\dots
\end{bmatrix},\; \hat{A_2}=\begin{bmatrix} B&0&0&\dots \\ 0&0&0&\dots\\
0&0&0&\dots\\\dots&\dots&\dots&\dots
\end{bmatrix},
\end{align}
then
$$ M_{\varphi}^*-M_{\psi}M_z^*=D_{M_z^*} \hat{A_1} D_{M_z^*} \textup{ and }
M_{\psi}^*-M_{\varphi}M_z^*=D_{M_z^*} \hat{A_2} D_{M_z^*}.  $$

Therefore, $\hat{A_1},\hat{A_2}$ are the fundamental operators of
$(M_{\varphi}^*,M_{\psi}^*,M_z^*)$.

 Let us denote $(M_{\varphi}^*,M_{\psi}^*,M_z^*)$ by
$(R_1,R_2,W)$. Therefore,
 \begin{eqnarray}
 \label{01}&R_1-R_2^*W=D_W\hat{A_1}D_W
\\ \label{02}& R_2-R_1^*W=D_W\hat{A_2}D_W.
 \end{eqnarray}

\noindent \textit{Claim 3.} $\hat{A_i}D_W|_{\mathcal D_{T_3}}
\subseteq \mathcal D_{T_3}$ and $\hat{A_i}^*D_W|_{\mathcal
D_{T_3}} \subseteq \mathcal D_{T_3}$
for $i=1,2$.\\
\textit{Proof of claim.} Clearly $D_W=D_{M_z^*}=I_d$ on $\mathcal
D_W$. Let $h_0=(c_0,0,0,\cdots)^T\in\mathcal D_{T_3}$. Then
$\hat{A_1}D_Wh_0=(A^*c_0,0,0,\cdots)^T=M_{\varphi}^*h_0=R_1h_0$.
Since $R_1|_{\mathcal H}=S_1$, $R_1h_0 \in \mathcal H$. Therefore
$(A^*c_0,0,0,\cdots)^T \in \mathcal D_{T_3}$ and
$\hat{A_1}D_W|_{\mathcal D_{T_3}} \subseteq \mathcal D_{T_3}$.
Similarly we can prove that $\hat{A_2}D_W|_{\mathcal D_{T_3}}
\subseteq \mathcal D_{T_3}$.

We compute the adjoint of $T_3$. Let $(c_0,c_1,0,\cdots)^T$ and
$(d_0,d_1,0,\cdots)^T$ be two arbitrary elements in $\mathcal H$
where $(c_0,0,0,\cdots)^T, (d_0,0,0,\cdots)^T \in \mathcal
D_{T_3}$ and $(0,c_1,0,\cdots)^T,(0,d_1,0,\cdots)^T \in
Ker(D_{T_3})$. Now
\begin{align*}
\langle T_3^*(c_0,c_1,0,\cdots)^T,(d_0,d_1,0,\cdots)^T \rangle &
=\langle (c_0,c_1,0,\cdots)^T,T_3(d_0,d_1,0,\cdots)^T \rangle \\&
=\langle (c_0,c_1,0,\cdots)^T,W(d_0,d_1,0,\cdots)^T \rangle \\&
=\langle (c_0,c_1,0,\cdots)^T,(d_1,0,0,\cdots)^T \rangle \\&
=\langle c_0,d_1 \rangle_E \\& =\langle
(0,c_0,0,\cdots)^T,(d_0,d_1,0,\cdots)^T \rangle.
\end{align*}
Therefore $$ T_3^*(c_0,c_1,0,\cdots)^T=(0,c_0,0,\cdots)^T.$$ Now
$h_0=(c_0,0,0,\cdots)^T\in\mathcal D_{T_3}$ implies that
$T_3^*h_0=(0,c_0,0,\cdots)^T \in \mathcal H$ and
$M_{\psi}^*(0,c_0,0,\cdots)^T=R_2(0,c_0,0,\cdots)^T =
(Ac_0,0,0,\cdots)^T \in \mathcal H$. In particular,
$(Ac_0,0,0,\cdots)^T \in \mathcal D_{T_3}$. Therefore
$\hat{A_1}^*D_Wh_0=(Ac_0,0,0,\cdots)^T \in \mathcal D_{T_3}$ and
$\hat{A_2}^*D_W|_{\mathcal D_{T_3}} \subseteq \mathcal D_{T_3}$.
Similarly we can prove that $\hat{A_2}^*D_W|_{\mathcal
D_{T_3}}\subseteq
\mathcal D_{T_3}$. Hence \textit{Claim 3} is proved.\\

\noindent \textit{Claim 4.} $\hat{A_i}|_{\mathcal D_{T_3}}=A_i$ and $\hat{A_i}^*|_{\mathcal D_{T_3}}=A_i^*$ for $i=1,2$.\\
\textit{Proof of Claim.}
 It is obvious that $\mathcal D_{T_3} \subseteq \mathcal D_W =E\oplus\{0\}\oplus \{0\}\oplus
 \cdots$. Now since $W|_{\mathcal H}=T_3$ and $D_W$ is projection onto $\mathcal
 D_W$, we have that $ D_W|_{\mathcal H}= D_W^2|_{\mathcal H}=D_W^2|_{\mathcal D_{T_3}} =D_{T_3}^2
 $. Therefore, $D_{T_3}^2$ is a projection onto $\mathcal D_{T_3}$
 and $D_{T_3}^2=D_{T_3}$.
 From (\ref{01}) we have that
 \begin{eqnarray} \label{03} P_{\mathcal H}(R_1-R_2^*W)|_{\mathcal H}=
 P_{\mathcal H}(D_{W}\hat{A_1}D_{W})|_{\mathcal H}.
 \end{eqnarray}
 Since $(R_1,R_2,W)$ is an $\mathbb E$-co-isometric extension of
 $(T_1,T_2,T_3)$, the LHS of (\ref{03}) is equal to
 $T_1-T_2^*T_3$.
Again since $A_1,A_2$ are the fundamental operators of
$(T_1,T_2,T_3)$, we have that
 \begin{eqnarray} \label{04}
 T_1-T_2^*T_3=D_{T_3}A_1D_{T_3}, \quad A_1\in \mathcal L(\mathcal D_{T_3}).
 \end{eqnarray}
It is clear that $T_1-T_2^*T_3$ is $0$ on the ortho-complement of
$\mathcal D_{T_3}$, that is on $Ker(D_{T_3})$. Therefore,
\begin{eqnarray} \label{05} T_1-T_2^*T_3=P_{\mathcal D_{T_3}}(R_1-R_2^*W)|_{\mathcal D_{T_3}}=
 P_{\mathcal D_{T_3}}(D_{W}\hat{A_1}D_{W})|_{\mathcal D_{T_3}}.
 \end{eqnarray}
Again since $D_W|_{\mathcal D_{T_3}}=D_{T_3}=I_d$ on $\mathcal
D_{T_3}$, the RHS of (\ref{05}) is equal to\\
$(D_{W}\hat{A_1}D_{W})|_{\mathcal D_{T_3}}$ and hence
\begin{eqnarray} \label{06} T_1-T_2^*T_3=(R_1-R_2^*W)|_{\mathcal D_{T_3}}=
 (D_{W}\hat{A_1}D_{W})|_{\mathcal D_{T_3}}=D_{T_3}\hat{A_1}D_{T_3}.
 \end{eqnarray}
The last identity follows from the fact (\textit{Claim 3}) that
$\hat{A_1}D_W|_{\mathcal D_{T_3}}\subseteq \mathcal D_{T_3}$. By
the uniqueness of $A_1$ we get that $\hat{A_1}|_{\mathcal
D_{T_3}}=A_1$. Also since $\mathcal D_{T_3}$ is invariant under
$\hat{A_1}^*$ by \textit{Claim 3}, we have that
$\hat{A_1}^*|_{\mathcal D_{T_3}}=A_1^*$. Similarly we can prove
that $\hat{A_2}|_{\mathcal D_{T_3}}=A_2$ and
$\hat{A_2}^*|_{\mathcal D_{T_3}}=A_2^*$. Thus the proof to \textit{Claim 4} is complete.\\

Now since $(M_{\varphi},M_{\psi},M_z)$ on $l^2(E)$ is an $\mathbb
E$-isometry,
 $M_{\varphi}$ and $M_{\psi}$ commute, that is
\begin{align*}
& \begin{bmatrix} A&0&0&\dots\\ B&A&0&\dots\\
0&B&A&\dots\\ \dots&\dots&\dots&\dots
\end{bmatrix} \begin{bmatrix} B^*&0&0&\dots \\ A^*&B^*&0&\dots\\
0&A^*&B^*&\dots\\\dots&\dots&\dots&\dots
 \end{bmatrix} \\ = & \begin{bmatrix} B^*&0&0&\dots \\ A^*&B^*&0&\dots\\
0&A^*&B^*&\dots\\\dots&\dots&\dots&\dots\end{bmatrix}\begin{bmatrix} A&0&0&\dots\\ B&A&0&\dots\\
0&B&A&\dots\\ \dots&\dots&\dots&\dots \end{bmatrix}
\end{align*}
which implies that
\begin{align*}
& \begin{bmatrix} AB^*&0&0&\dots\\ BB^*+AA^*&AB^*&0&\dots\\
BA^*&BB^*+AA^*&AB^*&\dots\\ \dots&\dots&\dots&\dots
\end{bmatrix} \\ = & \begin{bmatrix} B^*A&0&0&\dots \\ A^*A+B^*B&B^*A&0&\dots\\
A^*B&A^*A+B^*B&B^*A&\dots\\\dots&\dots&\dots&\dots\end{bmatrix}.
\end{align*}
Comparing both sides we obtain the following,
\begin{enumerate}
\item $A^*B=BA^*$ \item $A^*A-AA^*=BB^*-B^*B$. \end{enumerate}
Therefore from (\ref{011}) we have that
\begin{enumerate}
\item $\hat{A_1}\hat{A_2}=\hat{A_2}\hat{A_1}$ \item
$\hat{A_1}^*\hat{A_1}-\hat{A_1}\hat{A_1}^*=\hat{A_2}^*\hat{A_2}-\hat{A_2}\hat{A_2}^*$.
\end{enumerate}
Taking restriction of the above two operator identities to the
subspace $\mathcal D_{T_3}$ we get
\begin{enumerate}
\item $A_1A_2=A_2A_1$ \item $A_1^*A_1-A_1A_1^*=A_2^*A_2-A_2A_2^*$.
\end{enumerate} The proof is now complete.

\end{proof}

\section{A counter example}

Let $\mathcal H_1=l^2(E)\oplus l^2(E),\; E=\mathbb C^2$ and let
$\mathcal H=\mathcal H_1 \oplus \mathcal H_1$. Let $T_1,T_2,T_3$
on $\mathcal H_1\oplus \mathcal H_1$ be the block operator
matrices
\[
T_1=\begin{bmatrix} 0&0\\0&J \end{bmatrix},\,
T_2=\begin{bmatrix} 0&0\\0&0
\end{bmatrix} \text{ and }T_3=\begin{bmatrix} 0&0\\Y&0
\end{bmatrix}
\]
where
\[
J=\begin{bmatrix} F&0\\0&0 \end{bmatrix} \text{ and }
Y=\begin{bmatrix} 0&V\\I&0 \end{bmatrix} \text{ on } \mathcal
H_1=l^2(E)\oplus l^2(E).
\]
Here $V=M_z$ and $I=I_d$ on $l^2(E)$
and $F$ on $l^2(E)$ is defined as
\begin{align*}
F\;:&\;l^2(E)\rightarrow l^2(E) \\& (c_0,c_1,c_2,\cdots)^T \mapsto
(F_1c_0,0,0,\cdots)^T,
\end{align*}
where we choose
\[
F_1=\begin{pmatrix} 0&\frac{1}{4} \\ 0&0 \end{pmatrix}
\]
so that $F_1$ is a non-normal contraction such that $F_1^2=0$.
Clearly $F^2=0$ and $F^*F\neq FF^*$. Since $FV=0$, $JY=0$ and thus
the product of any two of $T_1,T_2,T_3$ is equal to $0$. Now we
unfold the operators $T_1,T_2,T_3$ and write their block matrices
with respect to the decomposition $\mathcal H = l^2(E)\oplus
l^2(E)\oplus l^2(E)\oplus l^2(E)$:
\[
T_1=\begin{bmatrix} 0&0&0&0\\0&0&0&0\\0&0&F&0\\0&0&0&0
\end{bmatrix},\; T_2=
\begin{bmatrix} 0&0&0&0\\0&0&0&0\\0&0&0\emph{}&0\\0&0&0&0
\end{bmatrix} \textup{ and } T_3=\begin{bmatrix}
 0&0&0&0\\0&0&0&0\\0&V&0&0\\I&0&0&0 \end{bmatrix}.
\]
We shall prove later that $(T_1,T_2,T_3)$ is an $\mathbb
E$-contraction and let us assume it for now. Here
\begin{align*}
 D_{T_3}^2=I-T_3^*T_3 &=\begin{bmatrix} I&0&0&0\\0&I&0&0\\0&0&I&0\\0&0&0&I \end{bmatrix}-
 \begin{bmatrix} 0&0&0&I\\0&0&V^*&0\\0&0&0&0\\0&0&0&0 \end{bmatrix}
 \begin{bmatrix} 0&0&0&0\\0&0&0&0\\0&V&0&0\\I&0&0&0 \end{bmatrix} \\ &=
 \begin{bmatrix} 0&0&0&0\\0&0&0&0\\0&0&I&0\\0&0&0&I \end{bmatrix}=D_{T_3}.
 \end{align*}
Clearly $\mathcal D_{T_3}=\{0\}\oplus \{0\}\oplus l^2(E)\oplus
l^2(E)=\{0\}\oplus \mathcal H_1$ and $Ker(D_{T_3})=l^2(E)\oplus
l^2(E)\oplus \{0\}\oplus \{0\}=\mathcal H_1 \oplus \{0\}$. Also
for a vector $k_0=(h_0,h_1,0,0)^T\in Ker(D_{T_3})$ and for a
vector $k_1=(0,0,h_2,h_3)^T\in \mathcal D_{T_3}$,
$$T_3k_0=\begin{bmatrix} 0&0&0&0\\0&0&0&0\\0&V&0&0\\I&0&0&0 \end{bmatrix}
(h_0,h_1,0,0)^T=(0,0,Vh_1,h_0)^T\in \mathcal D_{T_3} $$ and
$$T_3k_1=\begin{bmatrix} 0&0&0&0\\0&0&0&0\\0&V&0&0\\I&0&0&0 \end{bmatrix}
(0,0,h_2,h_3)^T = (0,0,0,0)^T.$$

Thus $(T_1,T_2,T_3)$ satisfies all the conditions of Proposition
\ref{ultimate}. We now compute the fundamental operators $A_1,A_2$
of $(T_1,T_2,T_3)$.
\[
T_1-T_2^*T_3=T_1=\begin{bmatrix}
0&0&0&0\\0&0&0&0\\0&0&F&0\\0&0&0&0 \end{bmatrix}
=D_{T_3}A_1D_{T_3}=\begin{bmatrix}
0&0&0&0\\0&0&0&0\\0&0&I&0\\0&0&0&I
\end{bmatrix}A_1\begin{bmatrix} 0&0&0&0\\0&0&0&0\\0&0&I&0\\0&0&0&I \end{bmatrix}.
\]
Since $\mathcal D_{T_3}=\{0\}\oplus \mathcal H_1$ and
$A_1\in\mathcal L(\mathcal D_{T_3})$ we can set
\[
A_1=0\oplus
\begin{bmatrix} F&0\\0&0 \end{bmatrix} \quad \text{ on } \{0\}\oplus \mathcal H_1 (= \mathcal
D_{T_3})
\]
so that
\[
T_1-T_2^*T_3=D_{T_3}A_1D_{T_3}.
\]
Again $T_1^*T_3=0$ as $X^*V=0$ and therefore $T_2-T_1^*T_3=0$.
This shows that the fundamental operator $A_2$, for which
$T_2-T_1^*T_3=D_{T_3}A_2D_{T_3}$ holds, has to be equal to $0$.
Clearly
\[
A_1^*A_1-A_1A_1^*=0\oplus \begin{bmatrix}
F^*F-FF^*&0\\0&0
\end{bmatrix} \neq 0 \textup{ as } F^*F\neq FF^*
\]
but $A_2^*A_2-A_2A_2^*=0$. This violets the conclusion of
Proposition \ref{ultimate} and it is guaranteed that the $\mathbb
E$-contraction $(T_1^*,T_2^*,T_3^*)$ does not have an $\mathbb
E$-isometric dilation. Since every $\mathbb E$-unitary dilation is
necessarily an $\mathbb E$-isometric dilation,
$(T_1^*,T_2^*,T_3^*)$ does not have an $\mathbb E$-unitary dilation.\\

Now we prove that $(T_1,T_2,T_3)$ is an $\mathbb E$-contraction.
By Lemma \ref{simpler}, it suffices to show that $
\|p(T_1,T_2,T_3)\|\leq \|p\|_{\infty, \overline{\mathbb E}}\;, $
for any polynomial $p(x_1,x_2,x_3)$ in the co-ordinates of
$\mathbb E$. Let
\[
p(x_1,x_2,x_3)=a_0+\displaystyle \sum_{i=1}^3a_ix_i+
q(x_1,x_2,x_3),
\]
where $q$ is a polynomial containing only terms
of second or higher degree. Now
\begin{align*}
p(T_1,T_2,T_3)=a_0I+a_1T_1+a_3T_3=\begin{bmatrix} a_0I&0\\a_3Y &
a_0I+a_1J
\end{bmatrix}
\end{align*}
Since $Y$ is a contraction and $\|J\|=\dfrac{1}{4}$, it is obvious
that
\[
\left\| \begin{bmatrix} a_0I&0\\a_3Y & a_0I+a_1J
\end{bmatrix} \right\| \leq \left\| \begin{pmatrix} |a_0|&0\\
|a_3|&|a_0|+\dfrac{|a_1|}{4}
\end{pmatrix} \right\|.
\]
We divide the rest of the proof into two cases.\\

\textbf{Case 1.} When $|a_0|\leq |a_1|$.\\

We show that
\[
\left\| \begin{pmatrix} |a_0|&0\\
|a_3|&|a_0|+\dfrac{|a_1|}{4}
\end{pmatrix} \right\| \leq
\left\| \begin{pmatrix} |a_0|&0\\
|a_1|+|a_3|&|a_0|
\end{pmatrix} \right\|\,.
\]
Let $\begin{pmatrix} \epsilon \\ \delta
\end{pmatrix}$ be a unit vector in $\mathbb C^2$ such that
\[
\left\| \begin{pmatrix} |a_0|&0\\
|a_3|&|a_0|+\dfrac{|a_1|}{4} \end{pmatrix} \right\|= \left\| \begin{pmatrix} |a_0|&0\\
|a_3|&|a_0|+\dfrac{|a_1|}{4} \end{pmatrix}\begin{pmatrix} \epsilon
\\ \delta
\end{pmatrix} \right\|.
\]
Without loss of generality we can choose $\epsilon, \delta \geq 0$
because
\[
\left\| \begin{pmatrix} |a_0|&0\\
|a_3|&|a_0|+\dfrac{|a_1|}{4} \end{pmatrix}\begin{pmatrix} \epsilon
\\ \delta
\end{pmatrix} \right\|^2
 =|a_0\epsilon|^2+\left| |a_3\epsilon|+\left( |a_0|+\dfrac{|a_0|}{4} \right)\delta\right|^2
\]
and if we replace $\begin{pmatrix} \epsilon \\ \delta
\end{pmatrix}$ by $\begin{pmatrix} |\epsilon | \\ |\delta|
\end{pmatrix}$ we see that
\[
\left\| \begin{pmatrix} |a_0|&0\\
|a_3|&|a_0|+\dfrac{|a_1|}{4} \end{pmatrix}\begin{pmatrix}
|\epsilon|
\\ |\delta|
\end{pmatrix} \right\|^2 \geq
\left\| \begin{pmatrix} |a_0|&0\\
|a_3|&|a_0|+\dfrac{|a_1|}{4} \end{pmatrix}\begin{pmatrix} \epsilon
\\ \delta
\end{pmatrix} \right\|^2 \,.
\]
So, assuming $\epsilon, \delta \geq 0$ we get
\begin{align}
&\left\| \begin{pmatrix} |a_0|&0\\
|a_3|&|a_0|+\dfrac{|a_1|}{4} \end{pmatrix}\begin{pmatrix} \epsilon
\\ \delta
\end{pmatrix} \right\|^2 \notag \\
&=|a_0\epsilon|^2+\left\{ |a_3\epsilon|+\left(
|a_0|+\dfrac{|a_1|}{4} \right)\delta \right\} ^2 \notag \\
&=|a_0\epsilon|^2+|a_3\epsilon|^2+ \left\{
|a_0|^2+\dfrac{|a_0a_1|}{2}+\dfrac{|a_1|^2}{16} \right\}{\delta}^2
+
2|a_3|\left( |a_0|+\dfrac{|a_1|}{4} \right)\epsilon\delta \notag \\
&= \left\{
(|a_0|^2+|a_3|^2)\epsilon^2+|a_0|^2\delta^2+2|a_0a_3|\epsilon\delta
\right\} + \left\{ \dfrac{|a_1|^2}{16}+\dfrac{|a_0a_1|}{2}
\right\}\delta^2 + \dfrac{|a_1a_3|}{2}\epsilon\delta \,.
\label{eqn:ex1}
\end{align}
Again
\begin{align}
& \left\| \begin{pmatrix} |a_0|&0\\
|a_1|+|a_3|&|a_0| \end{pmatrix}\begin{pmatrix} \epsilon \\ \delta
\end{pmatrix} \right\|^2 \notag \\
& = |a_0\epsilon|^2+\{ (|a_1|+|a_3|)\epsilon+|a_0|\delta \}^2
\notag \\
& = |a_0|^2\epsilon^2+\{ |a_1|^2+|a_3|^2+2|a_1a_3|
\}\epsilon^2+2|a_0|(|a_1|+|a_3|)\epsilon\delta+|a_0|^2\delta^2
\notag \\
& =\left\{
(|a_0|^2+|a_3|^2)\epsilon^2+|a_0|^2\delta^2+2|a_0a_3|\epsilon\delta
\right\} +
(|a_1|^2\epsilon^2+2|a_0a_1|\epsilon\delta)+2|a_1a_3|\epsilon^2
\,. \label{eqn:ex2}
\end{align}
We now compare (\ref{eqn:ex1}) and (\ref{eqn:ex2}). If $\epsilon
\geq \delta$ then
\[
(|a_1|^2\epsilon^2+2|a_0a_1|\epsilon\delta)+2|a_1a_3|\epsilon^2
\geq \left( \dfrac{|a_1|^2}{16}+\dfrac{|a_0a_1|}{2}
\right)\delta^2+\frac{|a_1a_3|}{2}\epsilon\delta
\]
Therefore, it is evident from (\ref{eqn:ex1}) and (\ref{eqn:ex2})
that
\[
\left\| \begin{pmatrix} |a_0|&0\\
|a_3|&|a_0|+\dfrac{|a_1|}{4}
\end{pmatrix}\begin{pmatrix} \epsilon \\ \delta
\end{pmatrix} \right\|^2 \leq
\left\| \begin{pmatrix} |a_0|&0\\
|a_1|+|a_3|&|a_0|
\end{pmatrix}\begin{pmatrix} \epsilon \\ \delta
\end{pmatrix} \right\|^2\,.
\]
If $\epsilon < \delta$ we consider the unit vector $\begin{pmatrix} \delta \\
\epsilon \end{pmatrix}$ and it suffices if we show that
\[
\left\| \begin{pmatrix} |a_0|&0\\
|a_3|&|a_0|+\dfrac{|a_1|}{4}
\end{pmatrix}\begin{pmatrix} \epsilon \\ \delta
\end{pmatrix} \right\|^2 \leq
\left\| \begin{pmatrix} |a_0|&0\\
|a_1|+|a_3|&|a_0|
\end{pmatrix}\begin{pmatrix} \delta \\ \epsilon
\end{pmatrix} \right\|^2\,.
\]
A computation similar to (\ref{eqn:ex2}) gives
\begin{align}
& \left\| \begin{pmatrix} |a_0|&0\\
|a_1|+|a_3|&|a_0|
\end{pmatrix}\begin{pmatrix} \delta \\ \epsilon
\end{pmatrix} \right\|^2 \notag \\
& = |a_0|^2\delta^2+\{ |a_1|^2+|a_3|^2+2|a_1a_3|
\}\delta^2+2|a_0|(|a_1|+|a_3|)\epsilon\delta +|a_0|^2\epsilon^2
\notag \\
&= \{ |a_0|^2(\epsilon^2+\delta^2)+2|a_0a_3|\epsilon\delta \}+\{
|a_1|^2+|a_3|^2+2|a_1a_3| \}\delta^2+2|a_0a_1|\epsilon\delta  \notag \\
& = \{ |a_0|^2+2|a_0a_3|\epsilon\delta \}+\{
|a_1|^2+|a_3|^2+2|a_1a_3| \}\delta^2+2|a_0a_1|\epsilon\delta \,.
 \label{eqn:ex3}
\end{align}
In the last equality we used the fact that
$|\epsilon|^2+|\delta|^2=1$. Again from (\ref{eqn:ex1}) we have
\begin{align}
& \left\| \begin{pmatrix} |a_0|&0\\
|a_3|&|a_0|+\dfrac{|a_1|}{4} \end{pmatrix}\begin{pmatrix} \epsilon
\\ \delta
\end{pmatrix} \right\|^2 \notag \\
& = \{ |a_0|^2(\epsilon^2+\delta^2)+2|a_0a_3|\epsilon\delta \} +
\left\{ |a_3|^2\epsilon^2+\dfrac{|a_1a_3|}{2}\epsilon\delta
\right\}
+\left\{ \dfrac{|a_1|^2}{16}+\dfrac{|a_0a_1|}{2} \right\}\delta^2 \notag \\
& \leq \{ |a_0|^2(\epsilon^2+\delta^2)+2|a_0a_3|\epsilon\delta \}
+ \left\{ |a_3|^2\epsilon^2+\dfrac{|a_1a_3|}{2}\epsilon\delta
\right\} + \left\{ \dfrac{|a_1|^2}{16}+\dfrac{|a_1|^2}{2}
\right\}\delta^2 \notag \\
&=\{ |a_0|^2+2|a_0a_3|\epsilon\delta \} + \left\{
\dfrac{9|a_1|^2}{16}\delta^2+|a_3|^2\epsilon^2+\dfrac{|a_1a_3|}{2}\epsilon\delta
\right\} \label{eqn:ex4}
\end{align}
The last inequality follows from the fact that $|a_0|\leq |a_1|$.
Since $\epsilon<\delta$ we can conclude from (\ref{eqn:ex3}) and
(\ref{eqn:ex4}) that
\[
\left\| \begin{pmatrix} |a_0|&0\\
|a_3|&|a_0|+\dfrac{|a_1|}{4}
\end{pmatrix}\begin{pmatrix} \epsilon \\ \delta
\end{pmatrix} \right\|^2 \leq
\left\| \begin{pmatrix} |a_0|&0\\
|a_1|+|a_3|&|a_0|
\end{pmatrix}\begin{pmatrix} \delta \\ \epsilon
\end{pmatrix} \right\|^2\,.
\]
Therefore,
\[
\left\| p(T_1,T_2,T_3) \right\|\leq \left\| \begin{pmatrix} |a_0|&0\\
|a_3|&|a_0|+\dfrac{|a_1|}{4} \end{pmatrix} \right\| \leq \left\| \begin{pmatrix} |a_0|&0\\
|a_1|+|a_3|&|a_0| \end{pmatrix} \right\|.
\]
A classical result of
Caratheodory and Fej\'{e}r states that
\[
\inf \, \left\|b_0+b_1z+r(z) \right\|_{\infty, \overline{\mathbb
D}} =
\left\| \begin{pmatrix} b_0&0\\
b_1&b_0 \end{pmatrix} \right\|,
\]
where the infimum is taken over all polynomials $r(z)$ in one
variable which contain only terms of degree two or higher. For an
elegant proof to this result, see Sarason's seminal paper
\cite{sarason}, where the result is derived as a consequence of
the classical commutant lifting theorem of Sz.-Nagy and Foias (see
\cite{nagy}). Using this fact we have that
\begin{align}
\left\| p(T_1,T_2,T_3) \right\| & \nonumber \leq \left\| \begin{pmatrix} |a_0|&0\\
|a_1|+|a_3|&|a_0| \end{pmatrix} \right\| \\& \nonumber = \inf \,
\||a_0|+(|a_1|+|a_3|)z+r(z)\|_{\infty, \overline{\mathbb D}} \\&
\label{014} \leq
 \inf \, \| |a_0|+|a_1|x_1+|a_3|x_3+r_1(x_1,x_2,x_3) \|_{\infty,
 \Lambda}\\& \label{015} \leq \inf \, \| |a_0|+|a_2|+|a_1|x_1+|a_3|x_3+r_1(x_1,x_2,x_3)\|_{\infty, \Lambda}
 \\& \nonumber
 = \inf \, \| |a_0|+|a_1|x_1+|a_2|x_2+|a_3|x_3+r_1(x_1,x_2,x_3) \|_{\infty, \Lambda}\\&
\label{016} \leq \| a_0+a_1x_1+a_2x_2+a_3x_3+q(x_1,x_2,x_3)
\|_{\infty,
 \Lambda} \\& \nonumber \leq \| a_0+a_1x_1+a_2x_2+a_3x_3+q(x_1,x_2,x_3)
\|_{\infty,
 \overline{\mathbb E}} \\& \nonumber =\|p(x_1,x_2,x_3) \|_{\infty, \overline{\mathbb E}}.
\end{align}
Here $\Lambda=\{ (x,1,x)\,:\,x\in \overline{\mathbb D} \}\subseteq
\overline{\mathbb E}$ (by choosing $\beta_1=0,\, \beta_2=1$ in
Theorem \ref{thm1}) and $r(z)$ and $r_1(x_1,x_2,x_3)$ range over
polynomials of degree two or higher. The inequality (\ref{014})
was obtained by putting $x_1=x_3=z$ and $x_2=1$ which makes the
set of polynomials $|a_0|+|a_1|x_1+|a_3|x_3+r_1(z_1,z_2,z_3)$, a
subset of the set of polynomials $|a_0|+(|a_1|+|a_3|)z+r(z)$. The
infimum taken over a subset is always bigger than or equal to the
infimum taken over the set itself. We obtained the inequality
(\ref{015}) by applying a similar argument because we can extract
the polynomial $|a_2|x_2^2$ from the set $r_1(x_1,x_2,x_3)$ and
$|a_2|x_2^2=|a_2|$ when $x_2=1$. The inequality (\ref{016}) was
obtained by choosing $r_1(x_1,x_2,x_3)$ in particular to be equal
to
\[
(a_0-|a_0|+a_2-|a_2|)x_2^2+(a_1-|a_1|)x_1x_2+(a_3-|a_3|)x_2x_3+q(x_1,x_2,x_3).
\]
\textbf{Case 2.} When $|a_0|> |a_1|$.\\

It is obvious from Case 1 that
\[
\left\| \begin{pmatrix} |a_0|&0\\
|a_3|&|a_0|+\dfrac{|a_1|}{4} \end{pmatrix} \right\|  \leq \left\| \begin{pmatrix} |a_0|&0 \\
|a_3|&|a_0|+\dfrac{|a_0|}{4} \end{pmatrix} \right\| \leq \left\| \begin{pmatrix} |a_0|&0\\
|a_0|+|a_3|&|a_0| \end{pmatrix} \right\| \,.
\]
Therefore,
\begin{align}
\left\| p(T_1,T_2,T_3) \right\| & \nonumber \leq \left\| \begin{pmatrix} |a_0|&0\\
|a_0|+|a_3|&|a_0| \end{pmatrix} \right\| \\& \nonumber = \inf \,
\||a_0|+(|a_0|+|a_3|)z+r(z)\|_{\infty, \overline{\mathbb D}} \\&
\nonumber \leq
 \inf \, \| |a_0|+|a_0|x_1+|a_3|x_3+r_1(x_1,x_2,x_3) \|_{\infty,
 \Lambda}\\& \nonumber \leq \inf \, \| |a_0|+|a_2|+|a_0|x_1+|a_3|x_3+r_1(x_1,x_2,x_3)\|_{\infty, \Lambda}
 \\& \nonumber
 = \inf \, \| |a_0|+|a_0|x_1+|a_2|x_2+|a_3|x_3+r_1(x_1,x_2,x_3) \|_{\infty, \Lambda}\\&
\label{eqn:ex6} \leq \| a_0+a_1x_1+a_2x_2+a_3x_3+q(x_1,x_2,x_3)
\|_{\infty,
 \Lambda} \\& \nonumber \leq \| a_0+a_1x_1+a_2x_2+a_3x_3+q(x_1,x_2,x_3)
\|_{\infty,
 \overline{\mathbb E}} \\& \nonumber =\|p(x_1,x_2,x_3) \|_{\infty, \overline{\mathbb E}}\,.
\end{align}
Here all notations used are as same as they were in Case 1 and we
obtained the inequality (\ref{eqn:ex6}) by choosing
$r_1(x_1,x_2,x_3)$ in particular to be equal to
\[
(a_0-|a_0|+a_2-|a_2|)x_2^2+(a_1-|a_0|)x_1x_2+(a_3-|a_3|)x_2x_3+q(x_1,x_2,x_3).
\]

\vspace{0.60cm}

\noindent \textbf{Acknowledgement.} The author is indebted to the
referee for his/her careful and rigorous reading of this article
and for pointing out an inaccuracy in the first version of the
paper. The author greatly appreciates the warm and generous
hospitality provided by Indian Statistical Institute, Delhi, India
during the course of the work.

\end{document}